%svn co -q svn://macaulay2.math.uiuc.edu/Macaulay2/trunk/M2/Macaulay2/packages/Dmodules
%
\def\PREP{PREP}
\def\PUBL{PUBL}
\def\form{PREP}

\ifx\form\PREP
\documentclass[12pt]{amsart}
\fi
\ifx\form\PUBL
\documentclass{sig-alternate}
\fi

\usepackage{amssymb, colordvi, multirow, graphicx, algorithmic, setspace, color}

%%%%%%%%%%%%%%%%%%%%%%%%%%%%%%%%%%%%%%%%
% Customization of <algorithmic>

%%%%%%%%%%%%%%%%%%%%%%%%%%%%%%%%%%%%%%%%
% ENVIRONMENTS

%%%%%%%%%%%%%%%%%%%%%%%%%%%%%%%%%%%%%%%%
% new commands

\def \<{\langle}
\def \>{\rangle}
\def\BS{Bernstein--Sato}
\def\Ann{\operatorname{Ann}}
\def\NF{\operatorname{NF}}
\def\locusB{\operatorname{exceptionalLocusB}}
\def\locusCore{\operatorname{exceptionalLocusCore}}
\def\localB{\operatorname{localBFunction}}
\def\generalB{\operatorname{generalB}}
\def\multiplierIdeal{\operatorname{multiplierIdeal}}
\def\multiplierIdealLinAlg{\operatorname{multiplierIdealLA}}
\def\starIdeal{\operatorname{starIdeal}}
\def\LAtrick{\operatorname{linearAlgebraTrick}}

\def\gr{\operatorname{gr}}
\def\Gr{\operatorname{Gr}}
\def\Spec{\operatorname{Spec}}

\newcommand{\rr}{{\mathbb R}}
\newcommand{\cc}{{\mathbb C}}
\newcommand{\bC}{{\mathbb C}}
\newcommand{\bN}{{\mathbb N}}
\newcommand{\bZ}{{\mathbb Z}}
\newcommand{\bQ}{{\mathbb Q}}

\newcommand{\bP}{{\mathbb P}}

\newcommand{\bff}{{\boldsymbol{f}}}
\newcommand{\bfs}{{\boldsymbol{s}}}
\newcommand{\bft}{{\boldsymbol{t}}}
\newcommand{\bffs}{{\bff^\bfs}}

\newcommand{\bfp}{{\boldsymbol{\partial}}}

\newcommand{\bfx}{{\boldsymbol{x}}}

\newcommand{\zero}{{\mathbf{0}}}

\newcommand{\p}{{\partial}}

\newcommand{\IN}{{\operatorname{in}}}

 \def\D0{D_\zero}

\def\ol#1{{\overline {#1}}}
\def\lct{\operatorname{lct}}
\newcommand{\multI}[2]{{\mathcal J}({#1}^{#2})}
\newcommand{\Dx}{{D_X}}
\newcommand{\Dxt}{{D_Y}}

%%%%%%%%%%%%%%%%%%%%%%%%%%%%%%%%%%%%%%%%
\catcode`\@=11\relax \newdimen\p@renwd \font\tenex=cmex10
\setbox0=\hbox{\tenex B} \p@renwd=\wd0
\def\bbordermatrix#1{\begingroup \m@th
\setbox\z@\vbox{\def\\{\crcr\noalign{\kern2\p@\global\let\cr\endline}}%
    \ialign{$##$\hfil\kern2\p@\kern\p@renwd&\thinspace\hfil$##$\hfil
      &&\quad\hfil$##$\hfil\crcr
      \omit\strut\hfil\crcr\noalign{\kern-\baselineskip}%
      #1\crcr\omit\strut\cr}}%
  \setbox\tw@\vbox{\unvcopy\z@\global\setbox\@ne\lastbox}%
  \setbox\tw@\hbox{\unhbox\@ne\unskip\global\setbox\@ne\lastbox}%
  \setbox\tw@\hbox{$\kern\wd\@ne\kern-\p@renwd\left[\kern-\wd\@ne
    \global\setbox\@ne\vbox{\box\@ne\kern2\p@}%
    \vcenter{\kern-\ht\@ne\unvbox\z@\kern-\baselineskip}\,\right]$}%
  \null\;\vbox{\kern\ht\@ne\box\tw@}\endgroup}
\catcode`\@=12\relax

%%%%%%%%%%%%%%%%%%%%%%%%%%%%%%%%%%%%%%%%
%%%%%%%%%%%%%%%%   Layouts    %%%%%%%%%%%%%%%%%%
%
%  US size paper
%
\ifx\form\PREP
\headheight=8pt       \topmargin=30pt \textheight=611pt
\textwidth=456pt \oddsidemargin=6pt   \evensidemargin=6pt
\fi
%%%%%%%%%%%%%%%%%%%%%%%%%%%%%%%%%%%%%%%%
%%%%%%%%   Environments      %%%%%%%%%%%%%%%%%%%%%%
\numberwithin{equation}{section}
\newtheorem{theorem}{Theorem}
\numberwithin{theorem}{section}
\newtheorem{proposition}[theorem]{Proposition}
\newtheorem{lemma}[theorem]{Lemma}
\newtheorem{cor}[theorem]{Corollary}

\newtheorem{ex}[theorem]{Example}
\newtheorem{rem}[theorem]{Remark}
\newtheorem{definition}[theorem]{Definition}
\newtheorem{algorithm}[theorem]{Algorithm}

\newenvironment{example}{\begin{ex}\rm}{\end{ex}}
\newenvironment{remark}{\begin{rem}\rm}{\end{rem}}

%%%%%%%%%%%%%%%%%%%%%%%%%%%%%%%%%%%%%%%%
%\newcommand{\christine}[1]{{\color{blue} \sf $\clubsuit$ Christine: #1}}
%\newcommand{\anton}[1]{{\color{red} \sf $\clubsuit$ Anton: #1}}
%%%%%%%%%%%%%%%%%%%%%%%%%%%%%%%%%%%%%%%%
\begin{document}%%%%%%%%%%%%%%%%%%%%%%%%%%%%%%%
%%%%%%%%%%%%%%%%%%%%%%%%%%%%%%%%%%%%%%%%
\ifx\form\PREP
\def\publname{\scriptsize \Red{Draft of \today} \def\currentvolume{}
\def\currentissue{} \pagespan{1}{60} \PII{}} \copyrightinfo{}{}
\fi
%%%%%%%%%%%%%%%%%%%%%%%%%%%%%%%%%%%%%%%%
\ifx\form\PREP
\title
[Algorithms for Bernstein--Sato polynomials and multiplier ideals]
{Algorithms for Bernstein--Sato polynomials\\ and multiplier ideals}
\author{Christine Berkesch}
\email{cberkesc@math.purdue.edu}
\author{Anton Leykin}
\email{leykin@math.gatech.edu}
\fi
\ifx\form\PUBL
\title{Algorithms for Bernstein--Sato polynomials\\ and multiplier ideals}
\numberofauthors{2}
\author{
% 1st. author
\alignauthor
Christine Berkesch\\
       \affaddr{Department of Mathematics}\\
       \affaddr{Purdue University}\\
       \email{cberkesc@purdue.edu}\\
% 2nd. author
\alignauthor
Anton Leykin\\
       \affaddr{School of Mathematics}\\
       \affaddr{Georgia Institute of Technology}\\
       \email{leykin@math.gatech.edu}
}
  \conferenceinfo{ISSAC 2010,}{25--28 July 2010, Munich, Germany.}
  \CopyrightYear{2010}
  \crdata{978-1-4503-0150-3/10/0007}
\fi
% 14Q20 (Primary) Effectivity, complexity
% 14F18 Multiplier ideals
\ifx\form\PUBL
\maketitle
\fi
%%%%%%%%%%%%%%%%%%%%%%%%%%%%%%%%%%%%%%%%
\begin{abstract}%%%%%%%%%%%%%%%%%%%%%%%%%%%%%%%%
%%%%%%%%%%%%%%%%%%%%%%%%%%%%%%%%%%%%%%%%
The Bernstein--Sato polynomial (or global $b$-function) is an important invariant in singularity theory, which can be computed using symbolic methods in the theory of $D$-modules.
After providing a survey of known algorithms for computing the global $b$-function, we develop a new method to compute the local $b$-function for a single polynomial.
We then develop algorithms that compute generalized Bernstein--Sato polynomials of Budur--Musta\c{t}\v{a}--Saito and Shibuta for an arbitrary polynomial ideal.
These lead to computations of log canonical thresholds, jumping coefficients, and multiplier ideals.
Our algorithm for multiplier ideals simplifies that of Shibuta and shares a common subroutine with our local $b$-function algorithm.
The algorithms we present have been implemented in the $D$-modules package of the computer algebra system Macaulay2.
%%%%%%%%%%%%%%%%%%%%%%%%%%%%%%%%%%%%%%%%
\end{abstract}%%%%%%%%%%%%%%%%%%%%%%%%%%%%%%%%%
%%%%%%%%%%%%%%%%%%%%%%%%%%%%%%%%%%%%%%%%
\ifx\form\PREP
\maketitle
\fi

\ifx\form\PUBL
\category{G.0}{General}{Miscellaneous}

\terms{Algorithms.}

\keywords{Bernstein--Sato polynomial, log-canonical threshold, jumping coefficients, multiplier ideals, $D$-modules, $V$-filtration.}
\fi

%%%%%%%%%%%%%%%%%%%%%%%%%%%%%%%%%%%%%%%%
\section{I\lowercase{ntroduction}}%%%%%%%%%%%%%%%%%%%%%%%
%%%%%%%%%%%%%%%%%%%%%%%%%%%%%%%%%%%%%%%%
\label{sec:intro}

The multiplier ideals of an algebraic variety carry essential information about its singularities and have proven themselves a powerful tool in algebraic geometry.
However, they are notoriously difficult to compute;
nice descriptions are known only for very special families of varieties,
such as monomial ideals and hyperplane arrangements
\cite{mult monomial, mircea hyp mult, zach hyp mult, saito on}.
To briefly recall the definition of this invariant, let $X = \cc^n$
with coordinates $\bfx = x_1,\dots, x_n$.
For an ideal $\<\bff\> = \<f_1,\dots,f_r\> \subseteq \cc[\bfx]$
and a nonnegative rational number $c$,
the \emph{multiplier ideal of $\bff$ with coefficient $c$} is 
\begin{align*}
\multI{\bff}{c} =
\left\{ h\in \cc[\bfx] \ \bigg\vert\ \frac{|h|^2}{(\sum |f_i|^2)^c} \text{ is locally integrable} \right\}.
\end{align*}
It follows from this definition that $\multI{\bff}{c} \supseteq \multI{\bff}{d}$
for $c\leq d$ and $\multI{\bff}{0} = \cc[\bfx]$ is trivial.
The (global) \emph{jumping coefficients} of $\bff$ are a discrete sequence of
rational numbers $\xi_i = \xi_i(\bff)$ with
$0 = \xi_0 < \xi_1 < \xi_2 < \cdots$
satisfying the property that $\multI{\bff}{c}$ is constant exactly for $c\in[\xi_i,\xi_{i+1})$.
In particular, the \emph{log canonical threshold} of $\bff$ is $\xi_1$,
denoted by $\lct(\bff)$.
This is the least rational number $c$ for which $\multI{\bff}{c}$ is nontrivial.
The multiplier ideal $\multI{\bff}{c}$ measures the singularities of
the variety of $\bff$ in $X$; smaller multiplier ideals
(and lower log canonical threshold) correspond to worse singularities.
For an equivalent algebro-geometric definition and an introduction to this invariant,
we refer the reader to \cite{posII,short course}.

In this paper we develop an algorithm for computing multiplier ideals and jumping coefficients by way of an even finer invariant, \BS\ polynomials, or $b$-functions.
The results of Budur~{\em et al.}~\cite{bms} provide other applications
for our \BS\ algorithms, including multiplier ideal membership tests,
an algorithm to compute jumping coefficients,
and a test to determine if a complete intersection has at most rational singularities.

The first $b$-function we consider, the \emph{global \BS\ polynomial} of a
hypersurface, was introduced independently by Bernstein~\cite{Bernstein}
and Sato~\cite{Sato:B-S-polynomial}.
This univariate polynomial plays a central role in the theory of $D$-modules
(or algebraic analysis), which was founded by, amongst others, Kashiwara~
\cite{Kashiwara} and Malgrange~\cite{Malgrange}.
Moreover, the jumping coefficients of $\bff$ that lie in the interval $(0,1]$
are roots of its global \BS\ polynomial \cite{elsv};
however, this $b$-function contains more information.
Its roots need not be jumping coefficients, even if they are between $0$ and $1$
(see Example~\ref{ex:nonJC root}).

The \BS\ polynomial was recently generalized by
Budur~{\em et al.}~\cite{bms} to arbitrary varieties.
The maximal root of this \emph{generalized \BS\ polynomial} provides
a multiplier ideal membership test.
Shibuta defined another generalization to compute explicit generating sets for multiplier ideals \cite{shibuta}.
Our multiplier ideal algorithm employs the $b$-functions os Shibuta,
which we call the \emph{$m$-generalized \BS\ polynomial}.
However, it circumvents primary decomposition
and one elimination step through a syzygetic technique
(see Algorithms~\ref{alg: LAtrick} and \ref{alg: exceptional locus core}).
The correctness of our results relies heavily on the use of
$V$-filtra\-tions, as developed by Kashiwara and Malgrange \cite{Kashiwara:V-filtration, Malgrange:V-filtration}.

$D$-module computations are made possible by Gr\"obner bases techniques in the Weyl algebra.
The computation of the \BS\ polynomial was pioneered by Oaku in \cite{Oaku:local-b}.
His algorithm was one of the first algorithms in algebraic analysis, many of which are outlined in the book by Saito~{\em et al.}~\cite{HypergeomBook}.
The computation of the \emph{local \BS\ polynomial}
was first addressed in the early work of Oaku~\cite{Oaku:local-b},
as well as the recent work of Nakayama~\cite{Nakayama:local-b-function},
Nishiyama and Noro~\cite{Nishiyama-Noro:stratification-by-local-b},
and Schulze \cite{mathias differential, mathias normal}.
Bahloul and Oaku~\cite{Bahloul-Oaku:local-bs-ideals} address the computation of local \BS\ ideals that generalize \BS\ polynomials.
In this article we provide our version of the local algorithm for \BS\ polynomials,
part of which is vital to our approach to computation of multiplier ideals.

There are several implementations of algorithms for global and local $b$-functions in kan/sm1~\cite{KANwww}, Risa/Asir~\cite{risa-asir-www}, and Singular~\cite{GPS01}.
One can find a comparison of performance in
\cite{Levandovskyy-Morales:comparison}.
All of the algorithms in this article have been implemented
and can be found in the $D$-modules package~\cite{DmodulesM2}
of the computer algebra system Macaulay2~\cite{M2}.

\smallskip

The first author was partially supported by
NSF Grants DMS 0555319 and DMS 090112; the second author is partially supported by the NSF Grant DMS 0914802.

%%%%%%%%%%%%%%%%%%%%%%%%%%%%
\subsection*{Outline}%%%%%%%%%%%%%%%%%%
%%%%%%%%%%%%%%%%%%%%%%%%%%%%
\label{subsec:outline}

Section~\ref{sec:global} surveys the known approaches for computing
the global \BS\ polynomial, highlighting an algorithm of Noro \cite{NoroBPoly}.
In Section~\ref{sec:local}, we present an algorithm for computing the local \BS\ polynomial.
Algorithms for the generalized \BS\ polynomial for an arbitrary variety, as introduced by Budur~{\em et al.}~\cite{bms}, are discussed in Section~\ref{sec:generalized}, along with their applications.
Based on the methods of Section~\ref{sec:local},
Section~\ref{sec:bsm+mult} considers the $m$-generalized \BS\ polynomial of Shibuta \cite{shibuta} and contains our algorithms for multiplier ideals.

%%%%%%%%%%%%%%%%%%%%%%%%%%%%%%%%%%%%%%%%
\section{G\lowercase{lobal} \BS\ \lowercase{polynomials}}%%%%%%%%%%
%%%%%%%%%%%%%%%%%%%%%%%%%%%%%%%%%%%%%%%%
\label{sec:global}

Let $K$ be a field of characteristic zero, and set $X = K^n$ and $Y = X\times K$ with coordinates $(\bfx)$ and $(\bfx,t)$, respectively.
We consider the $n$-th Weyl algebra $\Dx = K\<\bfx,\bfp\>$ with generators $x_1,\ldots, x_n$ and $\p_{x_1},\ldots, \p_{x_n}$, as well as $\Dxt = K\<\bfx,\bfp_\bfx,t,\p_t\>$, the Weyl algebra on $Y$.
Define an action of $\Dxt$ on $N_f := K[\bfx][f^{-1},s]f^s$ as follows: $x_i$ and $\p_{x_i}$ act naturally for $i = 1, \ldots, n$, and
\begin{align*}
  t \cdot h(\bfx, s) f^s = h(\bfx, s+1) f f^s \quad \text{and}\quad 
  \p_t \cdot h(\bfx, s) f^s = -s h(\bfx, s-1) f^{-1} f^s,
\end{align*}
where $h \in K[\bfx][f^{-1},s]$.

Let $\sigma = -\p_t t$. For a polynomial $f \in K[\bfx]$, the \emph{global \BS\ polynomial} of $f$, denoted $b_f$, is the monic polynomial $b(s)\in K[s]$ of minimal degree satisfying the equation
\begin{align}\label{eq:global}
b(\sigma) f^s = P f f^s
\end{align}
for some $P\in \Dx \< \sigma \>$.

There is an alternate definition for the global \BS\ polynomial in terms of $V$-filtrations.
To provide this, we denote by $V^\bullet \Dxt$ the $V$-filtration of $\Dxt$ along $X$, where $V^m\Dxt$ is $\Dx$-generated by the set
$\{ t^\mu \p_t^\nu \mid \mu - \nu \geq m \}$.
Let $i_f: X \rightarrow Y$ defined by $i_f(\bfx) = (\bfx,f(\bfx))$ be the graph of $f$.
The $D$-module direct image of $K[\bfx]$ along $i_f$ is the module
\[
M_f := (i_f)_+ K[\bfx] \cong K[\bfx]\otimes_K K\<\p_t\>
\]
with actions of a vector field $\xi$ on $X$ and $t$,
\begin{align*}
\xi(p\otimes \p_t^\nu) = \xi p \otimes \p_t^\nu - (\xi f) p\otimes \p_t^{\nu + 1} 
\quad\text{and}\quad 
t\cdot (p\otimes \p_t^\nu) =  f p \otimes \p_t^\nu - \nu p \otimes \p_t^{\nu - 1},
\end{align*}
providing a $\Dxt$-module structure.
Notice that there is a canonical embed ding of $M_f$ into $N_f$, where $s$ is identified with $-\p_t t$.

With $\delta = 1\otimes 1\in M_f$, the global \BS\ polynomial $b_f$ is equal to the minimal polynomial of the action of $\sigma$ on the module $(V^0 \Dxt)\delta / (V^1\Dxt)\delta$.
We now survey three ways of computing this $b$-function.

%%%%%%%%%%%%%%%%%%%%%%%%%%%%%%
\subsection{By way of an annihilator}%%%%%%%%%%%
%%%%%%%%%%%%%%%%%%%%%%%%%%%%%%
\label{subsec:ann}

The global \BS\ polynomial $b_f (s)$ is the minimal polynomial of $\sigma := -\p_t t$ modulo $\Ann_{\Dx[\sigma]} f^s + \Dx[\sigma] f$, where $f^s\in N_f$.
By the next result, this annihilator can be computed from the left $\Dxt$-ideal
\[
I_f = \left\< t-f, \p_1+\textstyle\frac{\p f}{\p x_1}\p_t, \ldots, \p_n+\textstyle\frac{\p f}{\p x_n}\p_t \right\>.
\]
\begin{theorem}\cite[Theorem~5.3.4]{HypergeomBook} \label{thm:one var If}
The ideal $\Ann_{D[s]} f^s$ equals the image of $I_f \cap D[\sigma]$ under the substitution $\sigma \mapsto s$.
\end{theorem}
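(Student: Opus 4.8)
The plan is to deduce the theorem from the stronger claim that, inside $\Dxt$, one has the equality of left ideals $\Ann_{\Dxt} f^s = I_f$, where $f^s$ is regarded as an element of $N_f$ and $\sigma = -\p_t t$ acts on $N_f$ as multiplication by $s$. Granting this, the theorem is formal: intersecting with the subring $\Dx[\sigma]\subseteq\Dxt$ gives $\Ann_{\Dx[\sigma]} f^s = \Ann_{\Dxt} f^s \cap \Dx[\sigma] = I_f \cap \Dx[\sigma]$, and transporting along the isomorphism $\Dx[\sigma]\cong\Dx[s]$ sending $\sigma$ to $s$ yields the displayed statement. One inclusion requires no work: feeding $f^s$ into the given actions of $t$ and $\p_t$ on $N_f$ (and the natural action of $\p_{x_i}$) one computes $(t-f)\cdot f^s = 0$ and $\bigl(\p_{x_i}+\tfrac{\p f}{\p x_i}\p_t\bigr)\cdot f^s = 0$, so $I_f\subseteq\Ann_{\Dxt}f^s$, whence $I_f\cap\Dx[\sigma]\subseteq\Ann_{\Dx[\sigma]}f^s$.

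For the reverse inclusion the one substantive step is a normal-form reduction: I would show that every $P\in\Dxt$ is congruent modulo $I_f$ to an operator $Q\in K[\bfx][\p_t]$ (a polynomial in $\bfx$ and $\p_t$, which commute). Since $t-f\in I_f$ and $I_f$ is a left ideal, one gets $t^j-f^j\in I_f$ and then $\p_t^k t^j\equiv f^j\p_t^k\pmod{I_f}$; writing $P$ with all occurrences of $t$ on the right, this eliminates $t$ and reduces to $P\equiv P'\in\Dx[\p_t]$. To eliminate the $\p_{x_i}$, one checks that for any $Q\in K[\bfx][\p_t]$,
\[
  \p_{x_i} Q \ \equiv\ \tfrac{\p Q}{\p x_i}\ -\ \bigl(\tfrac{\p f}{\p x_i}\bigr)\,\p_t\,Q \pmod{I_f},
\]
again with right-hand side in $K[\bfx][\p_t]$; applying this repeatedly to the Weyl-algebra normal form of $P'$ and inducting on the total $\p_{\bfx}$-degree terminates with the desired $Q$. (This reduction is exactly the computation showing $\Dxt/I_f\cong M_f = K[\bfx]\otimes_K K\<\p_t\>$ via $1\mapsto\delta$; the relevant fact is that $\{\p_t^\nu\delta\}_{\nu\ge 0}$ is a $K[\bfx]$-basis of $M_f$.)

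It then remains to see that such a $Q$ cannot annihilate $f^s$ unless $Q=0$. Iterating the action of $\p_t$ on $N_f$ gives $\p_t^\nu f^s = (-1)^\nu s(s-1)\cdots(s-\nu+1)\,f^{-\nu} f^s$, so if $Q=\sum_\nu q_\nu(\bfx)\p_t^\nu$ and $Q f^s = 0$, then $\sum_\nu q_\nu(\bfx)(-1)^\nu s(s-1)\cdots(s-\nu+1) f^{-\nu} = 0$ in the integral domain $K[\bfx][f^{-1},s]$. Regarded as a polynomial in $s$, only the term with the largest index $N$ (where $q_N\ne 0$) contributes to the coefficient of $s^N$, which is then $(-1)^N q_N f^{-N}\ne 0$ — a contradiction, so $Q=0$ and $P\in I_f$. (Equivalently: $Qf^s$ is the image of $Q\delta$ under the embedding $M_f\hookrightarrow N_f$, $\delta\mapsto f^s$, so $Q\delta=0$ in $M_f$ and $Q=0$ by the basis statement above.) This establishes $\Ann_{\Dxt}f^s = I_f$, and with it the theorem.

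The substantive point is that $I_f$ is the \emph{entire} annihilator of $f^s$, not merely contained in it, and the normal-form reduction is what delivers this. The care needed there is that $I_f$ is only a left ideal, so every rewriting — replacing $t$ by $f$, and $\p_{x_i}$ by $-(\p f/\p x_i)\p_t$ — must be performed by left multiplication, and one must check that the procedure neither reintroduces a $t$ nor a $\p_{x_i}$ and that the induction on $\p_\bfx$-degree is genuinely decreasing. With that bookkeeping in place, the two direct computations in $N_f$ and the one-line linear-independence argument complete the proof without further difficulty.
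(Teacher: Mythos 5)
Your argument is correct. Note that the paper does not prove this statement at all---it is quoted from \cite[Theorem~5.3.4]{HypergeomBook}---so there is no in-paper proof to compare against; what you give is essentially the standard argument behind that citation: the two generator computations give $I_f\subseteq\Ann_{\Dxt}f^s$, the left-multiplication normal-form reduction modulo $I_f$ to an element of $K[\bfx][\p_t]$ together with the $s$-degree (top-coefficient) argument in the domain $K[\bfx][f^{-1},s]$ gives the reverse inclusion $\Ann_{\Dxt}f^s=I_f$, and intersecting with $\Dx[\sigma]$ and transporting along $\sigma\mapsto s$ (legitimate because $\sigma=-\p_t t$ acts on $N_f$ as multiplication by $s$, so the $\Dx[\sigma]$- and $\Dx[s]$-annihilators of $f^s$ correspond) yields the stated theorem. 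The delicate points---rewriting only by left multiplication, the strictly decreasing induction on total $\p_{\bfx}$-degree after re-normal-ordering, and the observation that only the top index $N$ contributes to the coefficient of $s^N$---are all handled correctly; the only cosmetic suggestion is to include the one-line verification that $\sigma\cdot h(\bfx,s)f^s=s\,h(\bfx,s)f^s$, which is what licenses the identification of the two annihilators at the outset.
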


%%%%%%%%%%%%%%%%%%%%%%%%%%%%%%
\subsection{By way of an initial ideal}% w.r.t. weight}%%%%
%%%%%%%%%%%%%%%%%%%%%%%%%%%%%%
\label{subsec:initial}

This method makes use of $w = ({\bf 0},1)\in \rr^n\times \rr$, the elimination weight vector for $X$ in $Y$.

\begin{theorem} \label{thm: BSxs}
Let $b(x,s)$ be nonzero in the polynomial ring $K[x, s]$.
Then $b(x, \sigma) \in (\IN_{(-w,w)} I_f) \cap K[x, \sigma]$ if and only if
there exists $Q\in D[s]$ satisfying the functional equation $Qf^{s+1} = b(x,s)f^s$.
In particular,
\[
\<b_f(\sigma)\> = \IN_{(-w,w)} I_f \cap K[\sigma].
\]
\end{theorem}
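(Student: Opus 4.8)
The plan is to prove the functional-equation characterization first and then extract the final displayed identity as the special case $b(x,s) = b_f(s)$ together with a degree-minimality argument. Recall that $I_f$ is the annihilator in $\Dxt$ of $f^s \in N_f$ (equivalently, of $\delta$), under the embedding $M_f \hookrightarrow N_f$; this is essentially the content packaged by Theorem~\ref{thm:one var If}, and we will use it freely. The weight vector $(-w,w)$ assigns weight $-1$ to $t$, weight $+1$ to $\p_t$, and weight $0$ to all $x_i, \p_{x_i}$; under the identification $\sigma = -\p_t t$, the graded object $\IN_{(-w,w)}\Dxt$ carries $\sigma$ to its own image, and it is a standard computation that $\IN_{(-w,w)}\Dxt \cap K[x,\sigma]$ is well-behaved as a commutative polynomial subalgebra.

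For the forward direction, suppose $b(x,\sigma) \in (\IN_{(-w,w)} I_f) \cap K[x,\sigma]$. Then there is an element $P \in I_f$ with $\IN_{(-w,w)}(P) = b(x,\sigma)$ (after the substitution $\p_t t \mapsto -\sigma$). Since $b(x,\sigma)$ is $(-w,w)$-homogeneous of weight $0$, the lower-weight terms of $P$ have negative weight, meaning they lie in $V^{\leq -1}\Dxt$, i.e.\ each such term is divisible on the right by $t$. Writing $P = b(x,-\p_t t) + (\text{terms in } \Dxt t)$ and applying $P$ to $f^s$ (which is killed by $P$ since $P \in I_f$), we get $b(x,-\p_t t) f^s = -(\text{terms in } \Dxt t) f^s$. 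Now $-\p_t t$ acts on $f^s$ as multiplication by $s$, and $t$ acts by $s \mapsto s+1$ followed by multiplication by $f$; so the right-hand side has the shape $Q f^{s+1}$ for a suitable $Q \in \Dx[s]$ obtained by collecting these right-$t$ factors and pushing the $\sigma$'s through. This yields $b(x,s) f^s = Q f^{s+1}$, as desired. The converse is the reverse bookkeeping: given $Qf^{s+1} = b(x,s)f^s$ in $N_f$, the element $b(x,-\p_t t) - Q t$ annihilates $f^s$, hence lies in $I_f$, and one checks its $(-w,w)$-initial form is exactly $b(x,\sigma)$ provided no cancellation drops the leading weight — which holds because $Qt \in V^{\leq -1}$ has strictly smaller weight than the weight-$0$ term $b(x,-\p_t t)$.

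The final identity then follows: by the $V$-filtration description of $b_f$ recalled above, $b_f(s)$ is characterized as the monic generator of minimal degree of the ideal of $b(s) \in K[s]$ admitting a functional equation $P f f^s = b(\sigma) f^s$, i.e.\ $Q f^{s+1} = b(s) f^s$ with $Q \in \Dx[\sigma]$. By the equivalence just proved (specialized to $b$ depending only on $s$, not $x$), this is precisely $\IN_{(-w,w)} I_f \cap K[\sigma]$, and since that intersection is an ideal in the principal ideal domain $K[\sigma]$, it is generated by its monic element of least degree, namely $b_f(\sigma)$. The main obstacle I expect is the careful control of leading terms in the two directions — ensuring in the forward direction that splitting $P$ along the $V$-filtration genuinely separates the weight-$0$ part from a right multiple of $t$ with no weight-$0$ contamination, and in the converse that forming $b(x,-\p_t t) - Qt$ does not accidentally lower the $(-w,w)$-degree; both hinge on the fact that $t$ strictly decreases $(-w,w)$-weight, which keeps the grading clean.
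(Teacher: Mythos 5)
Your argument is correct and is essentially the paper's own proof, unpacked: both hinge on the facts that the negative $(-w,w)$-weight part of an operator is a right multiple of $t$ (so weight-$0$ membership in $\IN_{(-w,w)}I_f$ amounts to $b(x,\sigma)\in I_f+V^1\Dxt$), that $t$ acts on $N_f$ as multiplication by $f$, and that the annihilator of $f^s$ is controlled by $I_f$. The only cosmetic differences are your label $V^{\leq-1}\Dxt$ for what the paper indexes as $V^1\Dxt$, and that your converse invokes the (true, standard) identity $I_f=\Ann_{\Dxt}f^s$, whereas the paper's terse proof cites Theorem~\ref{thm:one var If} as stated, which suffices if you instead note that $b(x,s)-Qf\in\Ann_{D[s]}f^s$ and use $t-f\in I_f$ to trade $f$ for $t$.
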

\begin{proof}
The action of $t$ on $N_f$ is multiplication by $f$, hence, the existence of the functional equation is equivalent to $b(x,s)\in I_f + V^1\Dxt$. The result now follows from Theorem \ref{thm:one var If}, which identifies $s$ with $\sigma$.
\end{proof}

The following algorithm provides a more economical way to compute the
global $b$-function using linear algebra.
By establishing a nontrivial $K$-linear dependency between normal forms
$\NF_G(s^i)$ with respect to a Gr\"obner basis $G$ of $\IN_{(-w,w)} I_f$,
where $0\leq i\leq d$ and $d$ is taken as small as possible,
this algorithm bypasses elimination of $\p_1,\ldots,\p_n$.
This trick was used for the first time by Noro in \cite{NoroBPoly}, where
a modular method to speed up $b$-function computations is provided as well.
We include the following algorithm for the convenience of the reader
as a similar syzygetic approach will be used in Algorithms
\ref{alg: exceptional locus core}, \ref{alg: LAtrick}, and \ref{alg: multI-lin-alg}.
Note that the coefficients of the output are, in fact, rational,
since the \emph{roots} of a $b$-function are rational \cite{Kashiwara}.

\begin{algorithm} \label{alg: globalBS} $b = globalBFunction(f,P)$
\begin{algorithmic}\ifx\form{PREP} [1] \fi
\REQUIRE a polynomial $f\in K[\bfx]$.
\ENSURE polynomial $b \in \bQ[s]$ is the \BS\ polynomial of $f$.
%\smallskip \hrule \smallskip
\STATE $G \leftarrow \mbox{Gr\"obner basis of } \IN_{(-w,w)} I_f$.
\STATE $d \leftarrow 0$.
\REPEAT
\STATE $d \leftarrow d + 1$
\UNTIL{$\exists (c_0,\ldots,c_{d})\in \bQ^{d+1}$ such that $c_d=1$ and
	\[
	\sum_{i=0}^d c_i \NF_G(s^i) = 0.
	\]}
\RETURN $\sum_{i=0}^d c_i s^i$.
%\smallskip \hrule \smallskip
\end{algorithmic}
\end{algorithm}
This approach can be exploited in a more general setting to compute
the intersection of a left ideal with a subring generated by one element
as shown in \cite{Levandovskyy-Morales:principal-intersection}.

%%%%%%%%%%%%%%%%%%%%%%%%%%%%%%
\subsection{By way of Brian\c{c}on--Maisonobe}%%%%%
%%%%%%%%%%%%%%%%%%%%%%%%%%%%%%
\label{subsec:b-m}

This approach, which is laid out it \cite{Briancon-Maisonobe:BernsteinIdeal},
computes  the annihilator of $f^s$ in an algebra of solvable type similar to,
but different from, the Weyl algebra.
This path has been explored by Castro-Jim\'enez and Ucha~\cite{UchaCastroPBW} and implemented in Singular~\cite{GPS01} with a performance analysis given by Levandovskyy and Morales in \cite{Levandovskyy-Morales:comparison} and recent improvements outlined in \cite{Levandovskyy-Morales:principal-intersection}.

%%%%%%%%%%%%%%%%%%%%%%%%%%%%%%%%%%%%%%%%
\section{L\lowercase{ocal} \BS\ \lowercase{polynomials}}%%%%%%%%%%%
%%%%%%%%%%%%%%%%%%%%%%%%%%%%%%%%%%%%%%%%
\label{sec:local}

In this section, we provide an algorithm to compute the local \BS\ polynomial of $f$ at a prime ideal of $K[\bfx]$, which is defined by replacing the use of $\Dx$ in \eqref{eq:global} by its appropriate localization.
Algorithms~\ref{alg: exceptional locus}~and~\ref{alg: exceptional locus core} use
Theorem \ref{thm: BSxs} to compute an ideal $E_b \subset K[\bfx]$ that describes
the locus of points where the $b$-function does not divide the given $b\in\bQ[s]$.

\begin{algorithm} \label{alg: exceptional locus} $E_b = \locusB(f,b)$
\begin{algorithmic}\ifx\form{PREP} [1] \fi
\REQUIRE a polynomial $f\in K[\bfx]$, a polynomial $b \in \bQ[s]$.
\ENSURE $E_b\subset K[\bfx]$ such that $\forall\  P \in \Spec K[\bfx]$,
\[
b_{f,P}\,|\,b \Leftrightarrow E_b \not\subset P.
\]
%\smallskip \hrule \smallskip
\STATE $G \leftarrow \mbox{generators of } \IN_{(-w,w)} I_f \cap K[\bfx, s]$, where $s = -\p_t t$.
\RETURN $\locusCore(G,b)$.
%\smallskip \hrule \smallskip
\end{algorithmic}
\end{algorithm}

The following subroutine computes $K[\bfx]$-syzygies between
the elements of the form $s^i g$ of $s$-degree at most $\deg b$ and $b$ itself.
It returns the projection of the syzygies onto the component corresponding to $b$.

\begin{algorithm} \label{alg: exceptional locus core} $E_b = \locusCore(f,b)$
\begin{algorithmic}\ifx\form{PREP} [1] \fi
\REQUIRE $G \subset K[\bfx,s]$, a polynomial $b \in \bQ[s]$.
\ENSURE $E_b\subset K[\bfx]$.
%\smallskip \hrule \smallskip
\STATE $G_1 \leftarrow \{\mbox{a Gr\"obner basis of } \<G\> \mbox{ w.r.t. a monomial order eliminating } s \}$.
\STATE $d \leftarrow \deg b$.
\STATE $G_2 \leftarrow \{s^i g \mid g\in G_1,\ i+\deg_s g \leq d \}$.
\STATE $S \leftarrow \ker\phi$ where
\[
\phi: K[\bfx]^{|G_2|+1}\to \bigoplus_{i=0}^d K[\bfx] s^i
\]
maps $e_i$, for $i=1,\ldots,|G_2|$, to the elements of $G_2$ and  $e_{|G_2|+1}$ to $b$. \label{line:syzygies}
\RETURN projection of $S\subset K[\bfx]^{|G_2|+1}$ onto the last coordinate.\label{line:projection}
%\smallskip \hrule \smallskip
\end{algorithmic}
\end{algorithm}
The computation of syzygies in line~\ref{line:syzygies} and projection in line~\ref{line:projection} of Algorithm~\ref{alg: exceptional locus core}
may be combined within one efficient Gr\"obner basis computation.
\begin{proof}[
\ifx\form{PREP}
Proof
\fi
of correctness of Algorithms~\ref{alg: exceptional locus}~and~\ref{alg: exceptional locus core}]
The local \BS\ polynomial $b_{f,P}$ at $P\in\Spec K[\bfx]$ divides the given
$b\in\bQ[s]$ if and only if
\[
\begin{array}{cl}
                 & Q'f^{s+1} = b_{f,P}f^s,\text{ for some }Q'\in K[\bfx]_P\otimes D[s]\\
  \Leftrightarrow    & Qf^{s+1} = hbf^s,\text{ for some }Q\in D[s],\ h\in K[\bfx]\setminus P.
\end{array}
\]
For $h\in K[\bfx]$,
\[
\begin{array}{cl}
   & Qf^{s+1} = hbf^s,\text{ for some }Q\in D[s]\\
  \Leftrightarrow & h b \in \IN_{(-w,w)} I_f \cap K[\bfx, s]\ \ \ \ \ \ \text{(by Theorem \ref{thm: BSxs})}\\
  \Leftrightarrow & h \text{ is the last coordinate of a syzygy} \\
                  & \text{in the module produced by line~\ref{line:syzygies}}\\
  \Leftrightarrow & h \in E_b.
\end{array}
\]
This proves that $b_{f,P}\,|\,b \Leftrightarrow E_b \not\subset P$.
\end{proof}

\begin{remark}[Particulars of Algorithm \ref{alg: exceptional locus}]
\label{rem:homogenize}
In order to compute generators of $\IN_{(-w,w)}I_f$, one may apply the homogenized Weyl algebra technique (for example, see~\cite[Algorithm~1.2.5]{HypergeomBook}).
Then to compute generators of $\IN_{(-w,w)} I_f \cap K[\bfx]\< t,\p_t\>$, eliminate $\bfp_\bfx$ and
apply the map $\psi$ defined as follows: for a $(-w,w)$-homogeneous $h\in K[\bfx]\<t,\p_t\>$ with $\deg_{(-w,w)} h = d$,
\[
\psi(h) = \left\{ \begin{array}{cl}t^d h, & \mbox{if }d\geq 0,\\ \p_t^{-d} h, & \mbox{if }d<0. \end{array}\right.
\]
This is the most expensive step of the algorithm.
\end{remark}

We are now prepared to compute the local \BS\ polynomial of $f$ at a prime ideal $P\subset K[\bfx]$. Its correctness follows from that of its subroutine, Algorithm~\ref{alg: exceptional locus}.

\begin{algorithm} \label{alg: localBS} $b = \localB(f,P)$
\begin{algorithmic}\ifx\form{PREP} [1] \fi
\REQUIRE a polynomial $f\in K[\bfx]$, a prime ideal $P\subset K[\bfx]$.
\ENSURE $b \in \bQ[s]$, the local Bernstein--Sato polynomial of $f$ at $P$.
%\smallskip \hrule \smallskip
\STATE $b \leftarrow  b_f$. \COMMENT{global $b$-function}
\FOR{ $r\in b_f^{-1}(0)$ }
\WHILE{ $(s-r)\,|\,b$ }
\STATE $b' \leftarrow b/(s-r)$.
\IF{$\locusB(f,b')\subset P$}
\STATE break the \emph{\bf while} loop.
\ELSE
\STATE $b\leftarrow b'$.
\ENDIF
\ENDWHILE
\ENDFOR
\RETURN b.
%\smallskip \hrule \smallskip
\end{algorithmic}
\end{algorithm}

\begin{remark}
Algorithm~\ref{alg: exceptional locus} can also be used to compute the stratification of $\Spec K[\bfx]$ according to local $b$-function. Below are the key steps in this procedure.
  \begin{enumerate}
    \item Compute the global $b$-function $b_f$.
    \item For all roots $c\in b_f^{-1}(0)$ compute
    \[
    E_{c,\,i} = \locusB(b_f/(s-c)^{\mu_c-i}),
    \]
    where $i\geq 0$ and is at most the multiplicity $\mu_c$ of the root $c$ in $b_f$.
    \item The stratum of $b = \Pi_{c\in b_f^{-1}(0)} (s-c)^{i_c}$, a divisor of $b_f$, is
    \[
    V\left(\bigcap_{c\in b_f^{-1}(0),\, i_c>0} E_{c,\,i_c-1}\right) \setminus \left(\bigcup_{c\in b_f^{-1}(0)} V(E_{c,\,i_c})\right).
    \]
  \end{enumerate}
  This approach is similar to that in the recent work
  \cite{Nishiyama-Noro:stratification-by-local-b} of Nishiyama and Noro,
  which offers a more detailed treatment.
\end{remark}

%%%%%%%%%%%%%%%%%%%%%%%%%%%%%%%%%%%%%%%%
\section{G\lowercase{eneralized} \BS\ \lowercase{polynomials}}%%%%%%%
%%%%%%%%%%%%%%%%%%%%%%%%%%%%%%%%%%%%%%%%
\label{sec:generalized}

%%%%%%%%%%%%%%%%%%%%%%%%%%%%%%%
\subsection{Definitions}%%%%%%%%%%%%%%%%%%%
%%%%%%%%%%%%%%%%%%%%%%%%%%%%%%%
For polynomials $\bff = f_1,\dots,f_r \in K[\bfx]$,
let $\bff^\bfs = \prod_{i=1}^r f_i^{s_i}$ and $Y = K^n\times K^r$
with coordinates $(\bfx,\bft)$.
Define an action of $\Dxt = K\<\bfx,\bft,\bfp_\bfx,\bfp_\bft\>$ on
$N_f := K[\bfx][\bff^{-1},\bfs]\bff^\bfs$ as follows: $x_i$ and $\p_{x_i}$,
for $i = 1, \ldots, n$, act naturally and
\begin{align*}
  t_j \cdot h(\bfx, s_1, \ldots, s_j, \ldots , s_r)  \bff^\bfs 
  &= h(\bfx, s_1, \ldots, s_j+1, \ldots , s_r) f_j \bff^\bfs,\\
  \p_{t_j} \cdot h(\bfx, s_1, \ldots, s_j,  \ldots , s_r)  \bff^\bfs 
  &= -s_j h(\bfx, s_1, \ldots, s_j-1, \ldots , s_r) f_j^{-1} \bff^\bfs,
\end{align*}
for $j = 1,\ldots,r$ and $h \in K[\bfx][\bff^{-1},\bfs]$.

With $\sigma = -\left( \sum_{i=1}^r \p_{t_i}t_i \right)$, the \emph{generalized Bernstein--Sato polynomial} $b_{\bff,g}$ of $\bff$ at $g\in K[\bfx]$ is the monic polynomial $b\in\bC[\bfs]$ of the lowest degree for which there exist $P_k \in \Dx\< \p_{t_i} t_j \mid 1\leq i,j\leq r \>$ for $k = 1,\ldots,r$ such that
\begin{align}\label{eq:generalB}
b(\sigma)g\bffs = \sum_{k=1}^r P_k g f_k \bffs.
\end{align}

\begin{remark}\label{rem: reason generalB}
When $r=1$, the generalized \BS\ polynomial $b_{\bff,1} = b_\bff$ is the global \BS\ polynomial of $\bff = f_1$ discussed in Section~\ref{sec:global}.
\end{remark}

There is again an equivalent definition of $b_{\bff,g}$ by way of the $V$-filtration.
To state this, let $V^\bullet \Dxt$ denote the $V$-filtration of $\Dxt$ along $X$, where $V^m\Dxt$ is $\Dx$-generated by the set $\{ \bft^\mu \bfp_\bft^\nu \mid |\mu| - |\nu| \geq m \}$.
The following statement may be taken as the definition of the $V$-filtration on $K[\bfx]$.
\begin{theorem}\cite[Theorem~1]{bms} \label{thm:1 bms}
For $c\in\bQ$ and sufficiently small $\epsilon>0$,
$\multI{\bff}{c} = V^{c+\epsilon}K[\bfx]$ and $V^cK[\bfx] = \multI{\bff}{c-\epsilon}$.
\end{theorem}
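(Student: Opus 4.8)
Since this statement is quoted from \cite{bms}, I will only describe the skeleton of a proof; a self-contained argument needs the full machinery of filtered holonomic $D$-modules. The first task is to spell out what $V^\bullet K[\bfx]$ means: one forms the graph embedding $i_{\bff}\colon X\to Y$, $\bfx\mapsto(\bfx,\bff(\bfx))$, the direct image $M_{\bff}=(i_{\bff})_+K[\bfx]$, and $\delta = 1\otimes 1\in M_{\bff}$, and sets $V^c K[\bfx] = \{\,g\in K[\bfx] \mid g\delta\in V^c M_{\bff}\,\}$, where $V^\bullet M_{\bff}$ is the Kashiwara--Malgrange $V$-filtration of $\Dxt$ along $X$ induced on $M_{\bff}$. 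Because $K[\bfx]\subset V^0\Dxt$ and $g'\cdot(g\delta)=(g'g)\delta$ for $g,g'\in K[\bfx]$, each $V^c K[\bfx]$ is an ideal, the family is decreasing and exhaustive, and its jumps lie in a finite union of arithmetic progressions of rationals --- this last point relies on the rationality of the roots of the generalized \BS\ polynomial $b_{\bff}$. Consequently both $c\mapsto\multI{\bff}{c}$ and $c\mapsto V^c K[\bfx]$ are step functions, so it suffices to establish the single identity $\multI{\bff}{c}=V^{c+\epsilon}K[\bfx]$ for $0<\epsilon\ll 1$; the companion identity $V^c K[\bfx]=\multI{\bff}{c-\epsilon}$ then follows by passing to the appropriate one-sided limit and matching the one-sided continuity of the two filtrations.

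To prove the remaining equality I would compute both sides along a log resolution $\pi\colon X'\to X$ of the ideal $\<\bff\>$, with $\pi^{-1}\<\bff\>\cdot\calO_{X'}=\calO_{X'}(-F)$ for a normal crossings divisor $F$ and relative canonical divisor $K_{X'/X}$. On the multiplier ideal side this is, by definition, $\multI{\bff}{c}=\pi_*\calO_{X'}(K_{X'/X}-\lfloor cF\rfloor)$. On the $V$-filtration side I would combine two ingredients: first, when the pulled-back ideal is locally principal and monomial --- the normal crossings situation --- the $V$-filtration, and with it the analogue of the displayed formula, can be written down explicitly, as in the classical computations going back to Malgrange and Yano; second, the $V$-filtration is compatible with the proper direct image $\pi_+$, so that pushing forward the explicit filtration computed on $X'$ recovers $V^\bullet M_{\bff}$, and reading off the coefficient of $\delta$ in degree $c+\epsilon$ yields precisely $\pi_*\calO_{X'}(K_{X'/X}-\lfloor cF\rfloor)$. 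Comparing the two formulas closes the argument. (As an alternative, one may instead deduce the general case from the hypersurface case $r=1$ treated in the work cited by \cite{bms}, via a restriction argument relating $\bff$ to a single auxiliary polynomial and using that both invariants are compatible with such restrictions.)

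The \emph{main obstacle} is the compatibility of the $V$-filtration with the proper direct image $\pi_+$: this strictness statement is not formal, and it is here that Saito's theory of (mixed) Hodge modules --- or a delicate direct analysis of $b$-functions and relative holonomicity under blow-ups --- must be invoked. Two subsidiary inputs must also be in place: the rationality of the roots of $b_{\bff}$, used above to obtain discreteness of the jumps of $V^\bullet K[\bfx]$, and, if one takes the reduction to $r=1$, the compatibility of $V$-filtrations with the relevant restriction. Granting these, the normal crossings base case and the pushforward comparison are routine.
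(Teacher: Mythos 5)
This theorem is imported verbatim from Budur--Musta\c{t}\v{a}--Saito \cite{bms}; the paper you are annotating gives no proof at all and in fact remarks that the statement ``may be taken as the definition of the $V$-filtration on $K[\bfx]$.'' So there is no in-paper argument to compare against, and your proposal has to be judged against the proof in the cited literature. Measured that way, your outline does identify the right ingredients, but with the emphasis inverted: the route you present as the main one (compute $V^\bullet$ directly on a log resolution of $\<\bff\>$ and push forward) is not how the general case is handled, and for $r>1$ it is not routine. After pulling back, $\<\bff\>\calO_{X'}$ becomes a principal monomial ideal, but the $V$-filtration in question lives on the direct image under the graph embedding by the $r$ functions $f_1,\dots,f_r$, so the ``normal crossings base case'' you invoke does not literally apply to $M_\bff$. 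What \cite{bms} actually does is your parenthetical ``alternative'': reduce to a single auxiliary hypersurface, namely $g=\sum_i f_i y_i$ on $X\times\cc^r$, compare the two $V$-filtrations, and then quote the hypersurface case (Budur--Saito), which is where the log resolution and the Hodge-theoretic strictness of $V^\bullet$ under proper direct images enter. Calling this a ``restriction argument'' undersells that the comparison of $V$-filtrations across this reduction is itself a nontrivial step.

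Beyond that structural point, the proposal is a roadmap rather than a proof: the one piece of genuine mathematical content --- (bi)strictness of the $V$-filtration under the proper pushforward, available through Saito's theory of mixed Hodge modules --- is exactly what you defer, and the normal-crossings computation plus the reduction step are asserted to be ``routine'' without detail. Two smaller points should also be made explicit. First, deducing the second identity $V^cK[\bfx]=\multI{\bff}{c-\epsilon}$ from the first requires the normalization $V^c=\bigcap_{a<c}V^a$ (left-continuity of the $V$-filtration), which is part of its definition and should be cited rather than waved at as ``matching one-sided continuity.'' Second, discreteness and rationality of the jumps of $V^\bullet K[\bfx]$ come from the rationality of the roots of $b_{\bff,g}$ for the relevant $g$, which is again a theorem of \cite{bms} (building on Kashiwara), not a formal property. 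As a summary of where the difficulty lies your sketch is accurate and honest, but as it stands it would not serve as a proof of the quoted theorem; since the paper simply cites \cite{bms}, the appropriate fix here is to do the same.
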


Consider the graph of $\bff$, the map
$i_\bff: X \rightarrow Y$ defined by $i_\bff(\bfx) = (\bfx,f_1(\bfx),\dots,f_r(\bfx))$.
We denote the $D$-module direct image of $K[\bfx]$ along $i_\bff$ by
\begin{align}\label{eq:Mf}
M_\bff := (i_\bff)_+ K[\bfx] \cong K[\bfx]\otimes_K K\<\bfp_\bft\>.
\end{align}
This module carries a $\Dxt$-module structure,
where the action of a vector field $\xi$ on $X$ and that of $t_j$ are given by
\begin{align*}
\xi(p\otimes \bfp_\bft^\nu) =
\xi p \otimes \bfp_\bft^\nu - \sum_{i=1}^r (\xi f_i) p\otimes \bfp_\bft^{\nu + e_j}
\quad \text{and} \quad
t_j\cdot (p\otimes \bfp_\bft^\nu) =
f_j p \otimes \bfp_\bft^\nu - \nu_j p \otimes \bfp_\bft^{\nu - e_j},
\end{align*}
where $\bfp_\bft^\nu = \prod_{i=1}^r \p_{t_i}^{\nu_i}$
for $\nu = (\nu_1,\dots, \nu_r) \in \bN$
and $e_j$ is  the element of $\bN^r$ with $j$-th component equal to $1$
and all others equal to $0$.

Further, $M_\bff$ admits a $V$-filtration with
\[
V^m M_\bff = \sum_{\nu\in\bN^r} (V^{m+|\nu|} K[\bfx]) \otimes \bfp_\bft^\nu.
\]

For a polynomial $g\in K[\bfx]$ so that $g\otimes 1\in M_\bff$,
$b_{\bff,g}$ is equal to the monic minimal polynomial of the action of $\sigma$ on
\[
\ol{M}_{\bff,g} := \frac{(V^0 \Dxt) (g\otimes 1)}{(V^1 \Dxt) (g\otimes 1)}.
\]

\begin{remark}\label{rem:embedding}
There is a canonical embedding of $M_\bff$ into $N_\bff$, where $s_i$ is identified with $-\p_{t_i}t_i$. In particular, for a natural number $m$, the image of $(V^m \Dxt) (1\otimes 1)$ under this embedding is contained in $(V^0\Dxt) \<\bff\>^m \bff^\bfs \subseteq N_\bff$.
\end{remark}

%%%%%%%%%%%%%%%%%%%%%%%%%%%%%%%
\subsection{Algorithms}%%%%%%%%%%%%%%%%%%%
%%%%%%%%%%%%%%%%%%%%%%%%%%%%%%%
To compute the generalized \BS\ polynomial, we define the left $\Dxt$-ideal
\[
I_\bff = \< t_i-f_i \mid 1\leq i\leq r \> + \< \p_{x_j} + \textstyle\sum_{i=1}^r \textstyle\frac{\p f_i}{\p t_j} \p_{x_i} \mid 1\leq j\leq n \>
\]
that appears in the following multivariate analog of Theorem~\ref{thm:one var If}. Recall that $\sigma = -\left( \sum_{i=1}^r \p_{t_i}t_i \right)$.
\begin{theorem}
The ideal $I_\bff$ is equal to $\Ann_{\Dxt} \bff^\bfs$.
Furthermore, the ideal $\Ann_{\Dx [s]} \bff^\bfs$ equals the image of
$I_\bff \cap \Dx [\sigma]$ under the substitution $\sigma \mapsto s$.
\end{theorem}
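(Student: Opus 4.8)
The plan is to first establish the equality $I_\bff = \Ann_{\Dxt}\bff^\bfs$ and then deduce the statement about $\Ann_{\Dx[s]}\bff^\bfs$ by an elimination argument, exactly paralleling the single-variable Theorem~\ref{thm:one var If}. For the first part, the inclusion $I_\bff \subseteq \Ann_{\Dxt}\bff^\bfs$ is a direct verification: one checks that each generator annihilates $\bff^\bfs$ under the $\Dxt$-action on $N_\bff$. For $t_i - f_i$ this is immediate from the defining formula $t_i \cdot h\,\bff^\bfs = h(\dots,s_i+1,\dots)\,f_i\,\bff^\bfs$, since multiplication by $f_i$ and the shift $s_i\mapsto s_i+1$ both commute past the substitution once one observes that $t_i$ acting on $\bff^\bfs$ returns $f_i\bff^\bfs$. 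For the vector-field generators $\p_{x_j} + \sum_i \frac{\p f_i}{\p x_j}\p_{x_i}$ (note the index convention in the displayed $I_\bff$ should read $\p_{x_j} + \sum_i \frac{\p f_i}{\p x_j}\p_{t_i}$, matching the graph relations), one computes $\p_{x_j}\bff^\bfs = \sum_i s_i \frac{\p f_i}{\p x_j} f_i^{-1}\bff^\bfs$ by the chain rule, and checks this is cancelled by $\sum_i \frac{\p f_i}{\p x_j}\p_{t_i}\bff^\bfs = \sum_i \frac{\p f_i}{\p x_j}(-s_i) f_i^{-1}\bff^\bfs$.

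For the reverse inclusion $\Ann_{\Dxt}\bff^\bfs \subseteq I_\bff$, the standard approach is a normal-form reduction: using the relations $t_i \equiv f_i$ and $\p_{x_j} \equiv -\sum_i \frac{\p f_i}{\p x_j}\p_{t_i}$ modulo $I_\bff$, one shows that every element of $\Dxt$ is congruent modulo $I_\bff$ to an element of $K[\bfx]\langle \p_{t_1},\dots,\p_{t_r}\rangle$ — i.e., the quotient $\Dxt/I_\bff$ is spanned over $K[\bfx]$ by the monomials $\bfp_\bft^\nu$. This is precisely the presentation $M_\bff \cong K[\bfx]\otimes_K K\langle\bfp_\bft\rangle$ recorded in \eqref{eq:Mf}, so $\Dxt/I_\bff$ surjects onto $M_\bff$; since both are $K[\bfx]$-free on the same basis $\{\bfp_\bft^\nu\}$ the surjection is an isomorphism, and because $M_\bff \hookrightarrow N_\bff$ sends $1\otimes 1$ to $\bff^\bfs$ faithfully (Remark~\ref{rem:embedding}), the annihilator of $\bff^\bfs$ in $\Dxt$ is exactly $I_\bff$. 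The one point requiring care is that the reduction procedure genuinely terminates and produces a well-defined normal form; this is handled by noting that the rewriting rules for $t_i$ and $\p_{x_j}$ strictly decrease a suitable monomial order, or alternatively by citing the $D$-module direct-image computation along a graph embedding, which is classical.

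For the second assertion, observe that $\sigma = -\sum_i \p_{t_i}t_i$ acts on $\bff^\bfs$ as the scalar $s := \sum_i s_i$ is \emph{not} quite right — rather, each $-\p_{t_i}t_i$ acts as $s_i$, so the subalgebra $\Dx\langle \p_{t_i}t_j \mid i,j\rangle$ contains $\Dx[s_1,\dots,s_r]$, and in particular $\sigma$ corresponds to $s_1+\cdots+s_r$. To get $\Ann_{\Dx[s]}\bff^\bfs$ one intersects $I_\bff$ with the subalgebra $\Dx[\sigma] \subseteq \Dxt$ and applies the substitution $\sigma\mapsto s$; the argument that this intersection computes the annihilator over $\Dx[s]$ is the same as in the cited \cite[Theorem~5.3.4]{HypergeomBook}, using that $\Dx[\sigma]$ is the $\sigma$-stable part and that $N_\bff$ restricted to $\Dx[s_1,\dots,s_r]$ recovers the usual $\bff^\bfs$-module. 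The main obstacle I anticipate is getting the index conventions in the generators of $I_\bff$ and in the formula for the $\Dxt$-action to line up consistently (the excerpt as printed has $\frac{\p f_i}{\p t_j}$ where $\frac{\p f_i}{\p x_j}$ is meant, and $\p_{x_i}$ where $\p_{t_i}$ is meant); once these are fixed, the verification of both inclusions is routine, and the elimination step is a verbatim transcription of the $r=1$ case.
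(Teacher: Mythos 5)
Your argument is correct in substance, and you were right to flag the typos in the displayed generators of $I_\bff$ (the terms $\frac{\p f_i}{\p t_j}\p_{x_i}$ should be $\frac{\p f_i}{\p x_j}\p_{t_i}$, matching the sets $G_1$ in Algorithm~\ref{alg:generalB star}). Note, however, that the paper gives no proof of this theorem at all: it is stated as the multivariate analog of Theorem~\ref{thm:one var If} and left to the literature, so your write-up supplies the (standard) argument rather than parallels one. Three points to tighten. First, the rewriting step only shows $\Dxt/I_\bff$ is \emph{spanned} over $K[\bfx]$ by the classes of the $\bfp_\bft^\nu$; freeness is not known a priori, so conclude instead that a module spanned by a set mapping onto a $K[\bfx]$-basis of the free module $M_\bff$ of \eqref{eq:Mf} maps isomorphically, whence $\Dxt/I_\bff\cong M_\bff$ and $\Ann_{\Dxt}\delta=I_\bff$. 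Second, your reverse inclusion leans on Remark~\ref{rem:embedding}, which the paper also leaves unproved; you can avoid it by finishing in $N_\bff$ directly: reduce $P\in\Ann_{\Dxt}\bff^\bfs$ modulo $I_\bff$ to $\sum_\nu a_\nu(\bfx)\bfp_\bft^\nu$, compute $\bfp_\bft^\nu\bff^\bfs=\prod_i(-1)^{\nu_i}s_i(s_i-1)\cdots(s_i-\nu_i+1)f_i^{-\nu_i}\,\bff^\bfs$, and note these are $K[\bfx]$-linearly independent (compare coefficients in the falling-factorial basis of $\bfs$), forcing all $a_\nu=0$. Third, the second assertion does not require redoing the $r=1$ argument: since each $-\p_{t_i}t_i$ acts on any $h\bff^\bfs$ as multiplication by $s_i$, an operator $b(\bfx,\bfp_\bfx,\sigma)\in\Dx[\sigma]$ kills $\bff^\bfs$ if and only if $b(\bfx,\bfp_\bfx,s)$ does with $s=s_1+\cdots+s_r$; combined with part one and the fact that $\sigma$ commutes with $\Dx$ and is algebraically independent over it (so $\sigma\mapsto s$ gives $\Dx[\sigma]\cong\Dx[s]$), this immediately identifies $\Ann_{\Dx[s]}\bff^\bfs$ with the image of $I_\bff\cap\Dx[\sigma]$.
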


We now provide two subroutines in our computations of \BS\ polynomials and multiplier ideals.
The first finds the left side of a functional equation of the form \eqref{eq:generalB} without an expensive elimination step.
The second finds the homogenization of a $\Dxt$-ideal with respect to the weight vector $(-w,w)$, where $w = (\bf{0},\bf{1})\in \rr^n\times \rr^r$ determines an elimination term order for $X$ in $Y$.

\begin{algorithm} \label{alg: LAtrick} $b = \LAtrick(g,G)$
\begin{algorithmic}\ifx\form{PREP} [1] \fi
\REQUIRE generators $G$ of an ideal $I \subset \Dxt$,\\ a polynomial $g\in K[\bfx]$,\\
such that there is $b\in K[s]$ with $b(\sigma)g\in I$.
\ENSURE $b$, the monic polynomial of minimal degree such that $b(\sigma)g \in I$.
\STATE $B \leftarrow \{ a \mbox{ Gr\"obner basis of } \Dxt G \}$.
\STATE $d \leftarrow 0$.
\REPEAT
\STATE $d \leftarrow d + 1$
\UNTIL{$\exists (c_0,\ldots,c_{d})\in K^{d+1}$ such that $c_d=1$ and
	\[
	\sum_{i=0}^d c_i \NF_B(\sigma^i g) = 0.
	\] }
\RETURN $\sum_{i=0}^d c_i s^i$.
\end{algorithmic}
\end{algorithm}

\begin{algorithm} \label{alg: star} $G^* = \starIdeal(G,w)$
\begin{algorithmic}\ifx\form{PREP} [1] \fi
\REQUIRE generators $G$ of an ideal $J\subset \Dxt$,\\ 
	a weight vector $w \in \bZ^{n+r}$.
\ENSURE $G^* \subset \gr_{(-w,w)}\Dxt \cong \Dxt$, a set of generators of the ideal $J^*$ of $(-w,w)$-homogeneous elements of $J$.
%\smallskip \hrule \smallskip
\STATE $G^h \leftarrow$ generators $G$ homogenized w.r.t. a weight $(-w,w)$; 
	$G^h \subset \Dxt[h]$ with a homogenizing variable $h$ of weight 1.
\STATE $B \leftarrow \{ \mbox{a Gr\"obner basis of } (G^h,hu-1)\subset \Dxt[h,u]$
	w.r.t. a monomial order eliminating $\{h,u\} \}$.
\RETURN $B \cap \Dxt$.
%\smallskip \hrule \smallskip
\end{algorithmic}
\end{algorithm}

Below are two algorithms that are simplified versions of Shibuta's algorithms
for the generalized \BS\ polynomial. In the first, we use a module $\Dxt[s]$,
where the new variable $s$ commutes with all variables in $\Dxt$.

\begin{algorithm}\label{alg:generalB star} 
$b_{\bff,g} = \generalB(\bff,g, \text{\tt StarIdeal} )$ 
\begin{algorithmic}\ifx\form{PREP} [1] \fi
\REQUIRE $\bff = \{f_1,\ldots,f_r\} \subset K[\bfx]$, $g\in K[\bfx]$.
\ENSURE $b_{\bff,g}$, the generalized \BS\ polynomial of $\bff$ at $g$.
\STATE %\COMMENT{Generators of $\Ann_\Dxt(\bffs)$} \\
\begin{tabbing}
$G_1 \leftarrow $ \= $\{t_j-f_j \mid j=1,\ldots,r\}$ $\cup$ $\{\p_{x_i}+\sum_{j=1}^r \frac{\p f_j}{\p x_i} \p_{t_j} \mid i=1,\ldots,n \}$.
\end{tabbing}
\STATE
\begin{tabbing}
$G_2 \leftarrow $ \= $\starIdeal(G_1,w)$ $\cup\ \langle g f_i \mid 1\leq i \leq r \rangle \cup \{s-\sigma\} \subset \Dxt[s]$,
where $w$ assigns weight $1$\\ to all $\p_{t_j}$ and $0$ to all $\p_{x_i}$.\end{tabbing}
\RETURN $\LAtrick(G_2)$.
\end{algorithmic}
\end{algorithm}

\begin{algorithm}\label{alg:generalB initial} $b_{\bff,g} = \generalB(\bff,g, \text{\tt InitialIdeal} )$
\begin{algorithmic}\ifx\form{PREP} [1] \fi
\REQUIRE $\bff = \{f_1,\ldots,f_r\} \subset K[\bfx]$, $g\in K[\bfx]$.
\ENSURE $b_{\bff,g}$, the generalized \BS\ polynomial of $\bff$ at $g$.
\STATE %\COMMENT{Generators of $\Ann_\Dxt(\bffs)$} \\
\begin{tabbing}
$G_1 \leftarrow $ \= $\{t_j-f_j \mid j=1,\ldots,r\}$ 
$\cup$ $\{\p_{x_i}+\sum_{j=1}^r \frac{\p f_j}{\p x_i} \p_{t_j} \mid i=1,\ldots,n \}$.
\end{tabbing}
\STATE $G_2 \leftarrow G_1 \cap \Dxt \cdot g$.
\STATE $G_3 \leftarrow \text{generators of }\IN_{(-w,w)}\<G_2\>$, 
        where $w$ assigns weight $1$ to all $\p_{t_j}$ and $0$ to all $\p_{x_i}$.
\RETURN $\LAtrick(G_3)$.
\end{algorithmic}
\end{algorithm}

Their correctness follows from \cite[Theorems 3.4 and 3.5]{shibuta}.
\begin{remark} According to the experiments in
\cite{Levandovskyy-Morales:comparison} a modification of
Algorithm~\ref{alg: star} that uses elimination
involving one less additional variable exhibits better performance.
Our current implementation does not take advantage of this. 
\end{remark}

%%%%%%%%%%%%%%%%%%%%%%%%%%%%%%%
\subsection{Applications}%%%%%%%%%%%%%%%%%%
%%%%%%%%%%%%%%%%%%%%%%%%%%%%%%%

The study of the generalized \BS\ polynomial in \cite{bms} yields several applications of our algorithms, which we mention here. Each has been implemented in Macaulay2.

We begin with a result that shows that comparison with the roots of $b_{\bff, g}(s)$ provides a membership test for $\multI{\bff}{c}$ for any positive rational number $c$.

\begin{proposition}\cite[Corollary~2]{bms} \label{cor:2 bms}
Let $g\in K[\bfx]$ and fix a positive rational number $c$.
Then $g\in\multI{\bff}{c}$ if and only if $c$ is strictly less than all roots of $b_{\bff, g}(-s)$.
\end{proposition}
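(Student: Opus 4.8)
The plan is to deduce Proposition~\ref{cor:2 bms} from the $V$-filtration characterization of both the multiplier ideal (Theorem~\ref{thm:1 bms}) and the generalized \BS\ polynomial (the description of $b_{\bff,g}$ as the minimal polynomial of $\sigma$ on $\ol{M}_{\bff,g}$). The bridge between the two is the identification of $V^c K[\bfx]$ with the preimage under $g\mapsto g\otimes 1$ of $V^c M_\bff$, and then with a condition on the eigenvalues of $\sigma$ acting on the cyclic module $(V^0\Dxt)(g\otimes 1)$.

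First I would recall that, by Theorem~\ref{thm:1 bms}, $g\in\multI{\bff}{c}$ is equivalent to $g\in V^{c+\epsilon}K[\bfx]$ for all sufficiently small $\epsilon>0$; combined with the left-continuity statement $V^{c'}K[\bfx]=\multI{\bff}{c'-\epsilon}$, this says $g\in\multI{\bff}{c}$ iff the $V$-filtration ``jump'' of $g$ in $K[\bfx]$ is strictly greater than $c$. Under the embedding $g\mapsto g\otimes 1\in M_\bff$, and using the formula $V^m M_\bff=\sum_\nu (V^{m+|\nu|}K[\bfx])\otimes\bfp_\bft^\nu$ restricted to the $\nu=0$ component, the jump of $g$ in $K[\bfx]$ coincides with the jump of $g\otimes 1$ in $M_\bff$ — i.e. the largest $\alpha$ with $g\otimes 1\in V^\alpha M_\bff$ (up to the usual strict/non-strict bookkeeping). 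So the task reduces to: the jump of $g\otimes 1$ is $>c$ iff $c$ is smaller than every root of $b_{\bff,g}(-s)$.

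Next I would invoke the standard fact from the theory of $V$-filtrations along a smooth subvariety: on the cyclic $V^0\Dxt$-module $V^0(M_\bff)\cap (\text{the relevant coherent piece})$, the operator $\sigma=-\sum\p_{t_i}t_i$ acts, and the $b$-function $b_{\bff,g}(s)$ is precisely the minimal polynomial governing the transition from $(V^0\Dxt)(g\otimes1)$ to $(V^1\Dxt)(g\otimes1)$, as already stated in the excerpt. The roots of $b_{\bff,g}(s)$ are exactly the (negatives of the) values $\alpha$ at which $g\otimes 1$ contributes to $\gr^\alpha_V M_\bff$; equivalently, the set of roots of $b_{\bff,g}(-s)$ is the set of $V$-filtration exponents appearing in the cyclic module generated by $g\otimes 1$. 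Hence the smallest root of $b_{\bff,g}(-s)$ equals the jump (the largest $\alpha$ with $g\otimes1\in V^\alpha M_\bff$), and $g\otimes1\in V^{c+\epsilon}M_\bff$ for small $\epsilon>0$ iff $c$ is strictly less than that smallest root, hence strictly less than all roots.

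The main obstacle I anticipate is making the strict-versus-nonstrict inequality bookkeeping airtight: Theorem~\ref{thm:1 bms} pairs $\multI{\bff}{c}$ with $V^{c+\epsilon}K[\bfx]$ and $V^cK[\bfx]$ with $\multI{\bff}{c-\epsilon}$, so I need to track carefully whether ``$c$ less than all roots'' should be strict, and confirm that the decreasing, left-continuous, rationally-indexed filtration behaves as expected at the jump point. A secondary technical point is justifying that the $\nu=0$ graded piece of $M_\bff$ faithfully records the $V$-jump of $g$ in $K[\bfx]$ — that no cancellation with higher $\bfp_\bft^\nu$ terms can lower the apparent jump — which follows from the directness of the decomposition $M_\bff\cong K[\bfx]\otimes_K K\<\bfp_\bft\>$ and the defining formula for $V^m M_\bff$, but should be stated explicitly. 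Modulo these points, the proposition is a formal consequence of Theorem~\ref{thm:1 bms} and the $V$-filtration definition of $b_{\bff,g}$, exactly as in \cite{bms}.
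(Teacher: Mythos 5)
Note first that the paper does not actually prove Proposition~\ref{cor:2 bms}: it is quoted from \cite[Corollary~2]{bms}. Your reconstruction is nonetheless sound and follows exactly the route the paper itself takes when it proves the analogous statement for the $m$-generalized \BS\ polynomial (Theorem~\ref{thm: shibuta 4.2}): translate membership in $\multI{\bff}{c}$ into the condition that the $V$-jump of $g\otimes 1$ in $M_\bff$ exceeds $c$ via Theorem~\ref{thm:1 bms} and the formula $V^m M_\bff=\sum_\nu (V^{m+|\nu|}K[\bfx])\otimes\bfp_\bft^\nu$, then identify that jump with the smallest root of $b_{\bff,g}(-s)$. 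One intermediate claim should be tightened: it is not true that the roots of $b_{\bff,g}(-s)$ are \emph{all} the $V$-exponents occurring in the cyclic module $(V^0\Dxt)(g\otimes 1)$ --- that module has infinitely many nonzero graded pieces, while $b_{\bff,g}$ has finitely many roots; the roots are the $\alpha$ with $\Gr_V^\alpha \ol{M}_{\bff,g}\neq 0$, i.e.\ exponents of the quotient $(V^0\Dxt)(g\otimes1)/(V^1\Dxt)(g\otimes1)$, since $\sigma+\alpha$ is nilpotent on $\Gr_V^\alpha$. What your argument actually needs, and what makes it go through, is only the equality $\max\{\alpha \mid g\otimes 1\in V^\alpha M_\bff\}=\min\{\alpha \mid \Gr_V^\alpha((V^0\Dxt)(g\otimes1))\neq 0\}$, which is precisely \cite[(2.3.1)]{bms} (the same fact the paper invokes in proving Theorem~\ref{thm: shibuta 4.2}), together with the observation that this minimum coincides with the smallest root of $b_{\bff,g}(-s)$ because $(V^1\Dxt)(g\otimes1)$ sits in $V$-level strictly above the jump. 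With that substitution your bookkeeping of strict versus non-strict inequalities (using discreteness and rationality of the $V$-filtration) is correct, and the proof is complete.
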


When $\bff$ defines a complete intersection, 
Algorithms~\ref{alg:generalB star} and~\ref{alg:generalB initial}
provide tests to determine if $Z$ has at most rational singularities.

\begin{theorem}\cite[Theorem~4]{bms}
Suppose that $Z$ is a complete intersection of codimension $r$ in $Y$ defined by $\bff = f_1,\dots, f_r$.
Then $Z$ has at most rational singularities if and only if $\lct(\bff) = r$ and has multiplicity one as a root of $b_\bff(-s)$.
\end{theorem}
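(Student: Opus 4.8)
The statement is Theorem~4 of \cite{bms}; the plan is to re-express both conditions on $b_\bff$ through the $V$-filtration, where they become, respectively, a statement about the log canonical threshold and a statement about the semisimplicity of the monodromy, and then to match these against the cohomological description of rational singularities available for complete intersections.

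\emph{Translating the two conditions.} Since $Z$ is a complete intersection of codimension $r$ it is Cohen--Macaulay and Gorenstein. First, $\lct(\bff)\le r$ always: localizing at the generic point of $Z$, the ideal $\bff$ cuts out a smooth subvariety of codimension $r$, whose log canonical threshold is $r$, and $\lct(\bff)$ is the minimum of the local thresholds. Next, by Theorem~\ref{thm:1 bms} one has $V^{r}K[\bfx]=\multI{\bff}{r-\epsilon}$, so $\lct(\bff)=r$ is equivalent to $V^{r}K[\bfx]=K[\bfx]$, i.e.\ $\multI{\bff}{c}=K[\bfx]$ for all $c<r$; this is exactly the condition that $Z$ (equivalently the pair) is log canonical. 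Because the smallest positive jumping coefficient is always a root of $b_\bff(-s)$ (if $\gr_V^{\,c}M_\bff\ne 0$ then $-c$ lies in the spectrum of $\sigma$ on the sub-quotient of $M_\bff$ defining $b_\bff$), the condition $\lct(\bff)=r$ is equivalent to: $r$ is the \emph{smallest} root of $b_\bff(-s)$. Note that Proposition~\ref{cor:2 bms} applied with $g=1$ already recovers this part --- $1\in\multI{\bff}{c}$ for $c<r$ iff $c$ is below every root of $b_\bff(-s)$ --- but it is insensitive to root multiplicities, which is why the rationality refinement needs more.

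\emph{The multiplicity-one condition and rationality.} The polynomial $b_\bff(s)$ is the minimal polynomial of $\sigma$ on $\ol M_{\bff,1}=(V^{0}\Dxt)\delta/(V^{1}\Dxt)\delta$, so the multiplicity of $-r$ as a root equals the nilpotency order of $N:=\sigma+r$ on the generalized $(-r)$-eigenspace, which is the piece $\gr_V^{\,r}$ of the $\Dxt$-module generated by $\delta$. For a complete intersection this graded piece, together with its $N$-action, admits a geometric description: using the Koszul (\v{C}ech) resolution attached to the regular sequence $\bff$ one identifies $\gr_V^{\,r}M_\bff$ with a module whose weight filtration --- $N$ being, up to sign, the logarithm of the monodromy of the nearby-cycle functor underlying $\bigoplus_\alpha\gr_V^{\alpha}$ --- has associated graded $\bigoplus_{i}R^{i}\pi_*\calO_{\widetilde Z}$ for a resolution $\pi\colon\widetilde Z\to Z$, with $N$ lowering weight by $2$. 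Hence $N=0$ on this piece --- equivalently $-r$ is a \emph{simple} root of $b_\bff(s)$ --- precisely when $R^{i}\pi_*\calO_{\widetilde Z}=0$ for $i>0$. Combined with log canonicity and the Gorenstein hypothesis (so that, by Elkik's theorem and its Gorenstein converse, rational and canonical singularities coincide), this last vanishing is exactly the assertion that $Z$ has at most rational singularities. Putting the two steps together gives the theorem.

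\emph{Main obstacle.} The heart of the argument is the geometric identification of $\gr_V^{\,r}M_\bff$ with the higher direct images $R^{i}\pi_*\calO_{\widetilde Z}$, compatibly with $N$. For a general ideal there is no such description, but in the complete-intersection case one can reach it either through Saito's theory of mixed Hodge modules --- $M_\bff=(i_\bff)_+\calO$ is pure of the expected weight, and one tracks weights under the nearby cycles along the graph --- or by an explicit, if laborious, computation of the $V$-filtration on the Koszul resolution of $(i_\bff)_+\calO$ and the action of $\sigma$ on it. One must also be careful that the normalization conventions place the eigenvalue at exactly $-r$ (equivalently $+r$ after the sign flip $s\mapsto -s$ in the statement) and that the reduction ``rational $\Leftrightarrow$ canonical'' is legitimately invoked under the Gorenstein hypothesis. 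With those points in hand, the skeleton ``$\lct(\bff)=r$ $=$ log canonical $=$ $r$ is the smallest root of $b_\bff(-s)$'' together with ``$r$ simple $=$ $N$ trivial on $\gr_V^{\,r}$ $=$ no higher direct images $=$ rational'' is the proof.
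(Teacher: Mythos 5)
This statement is quoted in the paper from \cite[Theorem~4]{bms} and is not proved there, so there is no in-paper argument to compare against; judged on its own, your sketch has a genuine gap at its center. The reductions in your first step are fine ($\lct(\bff)\le r$ by localizing at a generic point of $Z$; $\lct(\bff)$ equals the smallest root of $b_\bff(-s)$ via Theorem~\ref{thm:1 bms} and Proposition~\ref{cor:2 bms} with $g=1$), and it is also true that the multiplicity of the root $r$ of $b_\bff(-s)$ is the nilpotency order of $\sigma+r$ on $\gr_V^{\,r}$ of $(V^0\Dxt)\delta/(V^1\Dxt)\delta$. But the entire content of the theorem is then packed into the assertion that this graded piece carries a weight filtration whose associated graded is $\bigoplus_i R^i\pi_*\calO_{\widetilde Z}$ with $N=\sigma+r$ lowering weights by two, so that $N=0$ if and only if the higher direct images vanish. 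You give no argument for this identification, and it is not a statement one can simply cite: for $r>1$ the specialization is along a codimension-$r$ center, so there is no nearby-cycle functor (and no monodromy logarithm) directly attached to $\bigoplus_\alpha\gr_V^\alpha M_\bff$; making sense of your ``$N$ is the logarithm of the monodromy'' already requires the very construction you are omitting. As you yourself flag, this is the main obstacle, and flagging it does not close it --- as written the proof establishes only the elementary half (the characterization of $\lct(\bff)=r$) plus an unproved equivalence for the multiplicity-one condition.

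For the record, the argument in \cite{bms} does not construct such a weight-graded description of $\gr_V^{\,r}M_\bff$ directly. Instead it reduces the codimension-$r$ situation to a single hypersurface, essentially by passing to $g=\sum_{i=1}^r y_i f_i$ on $X\times K^r$, comparing the (generalized) $b$-function of $\bff$ with the classical $b$-function of $g$, and then invoking M.~Saito's hypersurface criterion for rational singularities in terms of (microlocal) $b$-function roots, together with the relation between rational singularities of $Z$ and of the hypersurface $\{g=0\}$. If you want to salvage your route, you would need to either carry out that reduction or prove the mixed-Hodge-module identification you assert; also note that once you have $\pi_*\calO_{\widetilde Z}=\calO_Z$ and $R^{i>0}\pi_*\calO_{\widetilde Z}=0$ you already have rational singularities by definition, so the appeal to Elkik's theorem and the Gorenstein hypothesis in your last step is a detour rather than a needed ingredient.
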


To compute a local version of the generalized \BS\ polynomial, we need the following analog of Theorem~\ref{thm: BSxs}.

\begin{theorem} \label{thm: multiBSxs}
Let $b(\bfx,s)$ be a nonzero polynomial in $K[\bfx, s]$. 
Then the polynomial $b(\bfx, \sigma) \in \IN_{(-w,w)} I_{\bff} \cap K[\bfx, \sigma]$ 
if and only if there exist $Q_k\in D[s]$ s.t. $\sum_{k=1}^r Q_k f_k \bff^s = b(\bfx,s)\bff^s$.
\end{theorem}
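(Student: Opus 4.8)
The plan is to mirror the proof of Theorem~\ref{thm: BSxs} in the multivariate setting, using Theorem~\ref{thm:one var If}'s analog (the theorem identifying $I_\bff$ with $\Ann_{\Dxt}\bff^\bfs$ and $I_\bff\cap\Dx[\sigma]$ with $\Ann_{\Dx[s]}\bff^\bfs$ after $\sigma\mapsto s$). The key observation is that the action of the product $t_1\cdots t_r$ on $N_\bff$ is not simply multiplication by a single $f$ anymore, so the statement is phrased in terms of the ideal $\<\bff\>=\<f_1,\dots,f_r\>$ rather than a principal ideal: the relevant replacement for ``$b\in I_f+V^1\Dxt$'' will be a membership statement involving $V^1\Dxt$ generated by the $t_j$'s and whose image in $N_\bff$ lands in $\sum_k f_k\bff^\bfs$.

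First I would unwind what $\IN_{(-w,w)} I_\bff\cap K[\bfx,\sigma]$ means. Since $w=(\mathbf{0},\mathbf{1})$ is the elimination weight for $X$ in $Y$, an element of the initial ideal lying in $K[\bfx,\sigma]$ (where $\sigma=-\sum_i\p_{t_i}t_i$ has $(-w,w)$-weight zero) corresponds to a relation $b(\bfx,\sigma)\in I_\bff + V^1\Dxt$, exactly as in the one-variable case: the weight-zero part of a Gröbner basis element, with $t_j$'s of positive weight deleted, lands in the graded piece $V^0/V^1$. So I would first establish the equivalence
\[
b(\bfx,\sigma)\in \IN_{(-w,w)} I_\bff\cap K[\bfx,\sigma]
\iff
b(\bfx,\sigma)\in I_\bff + V^1\Dxt,
\]
citing the standard $V$-filtration/initial-ideal dictionary (this is where the elimination weight $w$ does its work).

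Second, I would translate the right-hand side into $N_\bff$. Using the analog of Theorem~\ref{thm:one var If}, $I_\bff=\Ann_{\Dxt}\bff^\bfs$, so $b(\bfx,\sigma)\in I_\bff+V^1\Dxt$ means precisely that $b(\bfx,\sigma)\cdot\bff^\bfs = P\cdot\bff^\bfs$ for some $P\in V^1\Dxt$ acting on $N_\bff$. Now $V^1\Dxt$ is $\Dx$-generated by the $t_j$ together with higher combinations, and the action of $t_j$ on $N_\bff$ sends $h(\bfx,\bfs)\bff^\bfs$ to $h(\bfx,\ldots,s_j+1,\ldots)f_j\bff^\bfs$; hence the image of $(V^1\Dxt)\bff^\bfs$ in $N_\bff$ is exactly $\sum_{k=1}^r (\Dx[\bfs]) f_k\bff^\bfs$. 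Identifying $\sigma$ with $s$ via the substitution in the cited theorem, $b(\bfx,\sigma)\bff^\bfs=P\bff^\bfs$ with $P\in V^1\Dxt$ becomes $b(\bfx,s)\bff^\bfs=\sum_{k=1}^r Q_k f_k\bff^\bfs$ for suitable $Q_k\in D[s]$, which is the functional equation in the statement. Reading the chain of equivalences backwards gives the converse.

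The main obstacle I anticipate is the middle translation step: verifying carefully that the image of $V^1\Dxt$ acting on $\bff^\bfs$ is precisely the submodule $\sum_k f_k N_\bff$ (no more, no less), i.e.\ that one can always absorb the $s_j$-shifts and the ``extra'' generators $\bft^\mu\bfp_\bft^\nu$ with $|\mu|-|\nu|\ge1$ back into $D[s]$-multiples of the $f_k$, and conversely that every such $Q_k f_k$ arises from an element of $V^1\Dxt$. The $r=1$ case of this is the single sentence ``the action of $t$ on $N_f$ is multiplication by $f$''; for $r>1$ one must check that the combinatorics of the multi-indices $\mu,\nu$ do not introduce denominators or obstruct the identification. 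Once that bookkeeping is done, the equivalence with the initial ideal and the substitution $\sigma\mapsto s$ are formal, exactly as in the proof of Theorem~\ref{thm: BSxs}.
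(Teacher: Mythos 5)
Your skeleton is the one the paper intends (its proof of Theorem~\ref{thm: multiBSxs} is literally ``same argument as Theorem~\ref{thm: BSxs}''), and two of your three steps are sound: for a $(-w,w)$-homogeneous weight-zero element such as $b(\bfx,\sigma)$, membership in $\IN_{(-w,w)}I_\bff$ is equivalent to membership in $I_\bff+V^1\Dxt$, and the implication from the functional equation to this membership is correct, since $\sum_k Q_k t_k\in V^1\Dxt$ and $I_\bff=\Ann_{\Dxt}\bffs$. The genuine gap is the step you defer as ``bookkeeping,'' and it is not bookkeeping: the identity you propose to use is false. Because $V^1\Dxt=\sum_k V^0\Dxt\, t_k$, membership $b(\bfx,\sigma)\in I_\bff+V^1\Dxt$ only yields $b(\bfx,\sigma)\bffs=\sum_k R_k f_k\bffs$ with $R_k\in V^0\Dxt$, i.e.\ with coefficients in the weight-zero algebra $\Dx\<\p_{t_i}t_j\mid 1\le i,j\le r\>$ as in \eqref{eq:generalB} --- not with $Q_k\in D[s]$, and not even with $Q_k\in \Dx[\bfs]$. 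Indeed the image of $(V^1\Dxt)\bffs$ is in general strictly larger than $\sum_k\Dx[\bfs]f_k\bffs$: take $n=1$, $r=2$, $f_1=x$, $f_2=x^2$. The monomial $t_1^2\p_{t_2}$ is one of the $\Dx$-generators of $V^1\Dxt$ (here $|\mu|-|\nu|=1$), and $t_1^2\p_{t_2}\bffs=-s_2f_1^2f_2^{-1}\bffs=-s_2\bffs$. If $-s_2\bffs$ lay in $\Dx[\bfs]f_1\bffs+\Dx[\bfs]f_2\bffs$, then, setting $u=s_1+2s_2$ and noting that $x,\p_x$ act on $N_\bff=K[x,x^{-1},\bfs]\bffs$ only through $u$ (so the decomposition of $N_\bff$ by $s_2$-degree over $K[x,x^{-1},u]\bffs$ is respected), comparing $s_2$-linear parts would give $\bffs\in\Dx[u]\,x\bffs+\Dx[u]\,x^2\bffs=\Dx[u]\,x\bffs$, i.e.\ $1\cdot x^u\in\Dx[u]\,x^{u+1}$, forcing the Bernstein--Sato polynomial of $x$ to divide $1$, contradicting $b_x(s)=s+1$. (Your alternative formulation, that the image equals $\sum_k f_k N_\bff$, is vacuously true because each $f_k$ is a unit in $N_\bff$, so it cannot produce operators $Q_k\in D[s]$.)

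The genuinely multivariate point --- invisible when $r=1$, where the weight-zero subalgebra is just $\Dx[\p_t t]=\Dx[\sigma]$, which is why the one-line proof of Theorem~\ref{thm: BSxs} suffices there --- is how to trade the off-diagonal operators $t_i\p_{t_j}$, $i\ne j$, arising from $V^0\Dxt$, for operators polynomial in the single variable $s=\sigma$. Your proposal does not supply this reduction, and ``identifying $\sigma$ with $s$'' does not provide it. Either prove the statement with $Q_k$ allowed in $\Dx\<\p_{t_i}t_j\>$ (your outline then works verbatim, and this is the shape of equation \eqref{eq:generalB}), or add an explicit argument (for instance along the lines of the results of Shibuta cited for Algorithms~\ref{alg:generalB star} and~\ref{alg:generalB initial}) showing that membership of $b(\bfx,\sigma)$ in $\IN_{(-w,w)}I_\bff\cap K[\bfx,\sigma]$ can always be certified with $D[s]$-coefficients; as written, your argument establishes only the direction from the functional equation to membership, together with the correct dictionary between $\IN_{(-w,w)}I_\bff$ and $I_\bff+V^1\Dxt$.
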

\begin{proof}
This follows by the same argument as that of Theorem~\ref{thm: BSxs}.
\end{proof}

\begin{remark}\label{rem:local generalB}
In light of Theorem~\ref{thm: multiBSxs}, the strategy in Section~\ref{sec:local} yields a computation of the local version of the generalized \BS\ polynomial.
The only significant difference comes from the lack of an analogue to the map $\psi$ of Remark~\ref{rem:homogenize}.
However, it is still possible to compute $\IN_{(-w,w)}I_\bff \cap K[\bfx, \sigma]$ by adjoining one more variable $s$ to the algebra and $s-\sigma$ to the ideal and eliminating $\bft$ and $\bfp_\bft$. In case of the hypersurface this is a more expensive strategy than the one described in Remark~\ref{rem:homogenize}.
\end{remark} %\anton{This is not implemented.}

%%%%%%%%%%%%%%%%%%%%%%%%%%%%%%%%%%%%%%%%
\section{M\lowercase{ultiplier ideals via m-generalized} \BS\ %%%%%%%%
\lowercase{polynomials}}%%%%%%%%%%%%%%%%%%%%%%%%%
%%%%%%%%%%%%%%%%%%%%%%%%%%%%%%%%%%%%%%%%
\label{sec:bsm+mult}

For this section, we retain the notation of Section~\ref{sec:generalized} and discuss Shibuta's $m$-generalized \BS\ polynomials. These are defined using the $V$-filtration of $\Dxt$ along $X$, but they also possess an equational definition.
In contrast to the generalized \BS\ polynomials of Section~\ref{sec:generalized}, this generalization allows us to simultaneously consider families of polynomials $K[\bfx]$, yielding a method to compute multiplier ideals.

\begin{definition}\label{def: generalBm}
Let $\overline{M}_\bff^{(m)} := (V^0\Dxt)\delta / (V^m\Dxt)\delta$ with $\delta = 1\otimes 1 \in M_\bff\cong K[\bfx]\otimes_K K\<\bfp_\bft\>$.
Define the \emph{$m$-generalized \BS\ polynomial}
$b_{f,g}^{(m)}$ to be the monic minimal polynomial of the action of
$\sigma := -\left( \sum_{i=1}^r \p_{t_i}t_i \right)$ on
\[
\ol{M}_{\bff,g}^{(m)} :=
(V^0 \Dxt) \overline{g\otimes 1} \subseteq \overline{M}_\bff^{(m)}.
\]
\end{definition}

\begin{remark}
Since $M_\bff$ is $V$-filtered, the polynomial $b_{f,g}^{(m)}$ is nonzero and its roots are rational.
\end{remark}

\begin{proposition}
%[Equational definition of the $m$-generalized \BS\ polynomial]
\label{prop: bm eq}
The $m$-generalized \BS\ polynomial
$b_{\bff,g}^{(m)}$ is equal to
the monic polynomial $b(s)$ of minimal degree in $K[s]$ such that there exist
$P_k\in  \Dx \<-\p_{t_i} t_j \mid 1\leq i,j \leq r \>$ and $h_k\in \<\bff\>^m$ such that in $N_\bff$ there is an equality
\begin{align}\label{eq:equation for bm}
b(\sigma) g \bff^\bfs = \sum_{k=1}^r P_k h_k \bff^\bfs.
\end{align}
\end{proposition}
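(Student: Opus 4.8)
The proposition asserts an equivalence between two descriptions of $b_{\bff,g}^{(m)}$: one via the $V$-filtration action of $\sigma$ on the module $\overline{M}_{\bff,g}^{(m)}$, and one via the equational form \eqref{eq:equation for bm} in $N_\bff$. My plan is to transport the $V$-filtration definition through the canonical embedding $M_\bff \hookrightarrow N_\bff$ of Remark~\ref{rem:embedding}, and then unwind what the minimal polynomial of $\sigma$ on $(V^0\Dxt)\overline{g\otimes 1}$ inside $\overline{M}_\bff^{(m)}$ means in concrete equational terms. The key translation is that $b(\sigma)$ annihilates $\overline{g\otimes 1}$ in $\overline{M}_\bff^{(m)} = (V^0\Dxt)\delta/(V^m\Dxt)\delta$ precisely when $b(\sigma)(g\otimes 1) \in (V^m\Dxt)\delta$, and minimality of degree is preserved since $K[\sigma]$ acts on the quotient.

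**Key steps.** First, I would record that $\sigma = -\sum_i \p_{t_i}t_i$ lies in $V^0\Dxt$ and normalizes the $V$-filtration, so $K[\sigma]$ acts on each $V^j\Dxt$-submodule and on $\overline{M}_\bff^{(m)}$; hence $b_{\bff,g}^{(m)}$ is well-defined as the monic generator of $\{b \in K[s] : b(\sigma)(g\otimes 1) \in (V^m\Dxt)\delta\}$, which is a nonzero ideal because $M_\bff$ is $V$-filtered. Second, using the embedding $M_\bff \hookrightarrow N_\bff$, I would identify the image of $(V^m\Dxt)\delta$: by Remark~\ref{rem:embedding} it is contained in $(V^0\Dxt)\<\bff\>^m\bff^\bfs$, and I claim (this is the point that needs care — see below) that within the relevant $V^0$-submodule the two agree, i.e. an element of $(V^0\Dxt)\delta$ lies in $(V^m\Dxt)\delta$ iff its image in $N_\bff$ has the form $\sum_k P_k h_k\bff^\bfs$ with $P_k \in \Dx\<\p_{t_i}t_j\> = V^0\Dxt \cap (\text{the appropriate subalgebra})$ and $h_k \in \<\bff\>^m$. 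Third, combining these, $b(\sigma)g\otimes 1 \in (V^m\Dxt)\delta$ translates exactly into the equality \eqref{eq:equation for bm} in $N_\bff$; taking $b$ of minimal degree on each side then gives $b_{\bff,g}^{(m)} = b$. I would also invoke Remark~\ref{rem: reason generalB}-style bookkeeping to confirm that the subalgebra $\Dx\<-\p_{t_i}t_j \mid 1\le i,j\le r\>$ named in the statement is exactly what $V^0$-operators acting on $\delta$ produce, since each $\p_{t_i}t_j$ preserves $V^0 M_\bff$ and the $t_j$-then-$\p_{t_i}$ passage accounts for the $g f_k$ versus $h_k \in \<\bff\>^m$ discrepancy with the plain generalized $b$-function.

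**The main obstacle.** The delicate step is the reverse inclusion in the identification of $(V^m\Dxt)\delta$ inside $N_\bff$: Remark~\ref{rem:embedding} only gives $(V^m\Dxt)(1\otimes 1) \subseteq (V^0\Dxt)\<\bff\>^m\bff^\bfs$ as a containment, and I need that an expression $\sum_k P_k h_k \bff^\bfs$ arising on the right-hand side of \eqref{eq:equation for bm} actually comes back from an element of $(V^m\Dxt)\delta$ (not merely from $(V^0\Dxt)\<\bff\>^m\bff^\bfs \cap M_\bff$, which could a priori be larger). Resolving this requires understanding the $V$-filtration on $M_\bff$ explicitly via $V^m M_\bff = \sum_\nu (V^{m+|\nu|}K[\bfx])\otimes\bfp_\bft^\nu$ and checking that multiplication by $f_{k}$ (the action of $t_k$ up to lower-order $\p_\bft$ terms) lands a $V^0$-class into $V^1$, so that an $m$-fold product lands in $V^m$; the shift in $|\nu|$ is what makes $\<\bff\>^m$ the correct ideal. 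I expect this to be essentially the content of Shibuta's proof, so I would cite \cite{shibuta} for the filtration computation and present the embedding-translation as the new, streamlined argument, flagging that the whole equivalence is the multivariate $m$-analog of the passage between the two definitions of $b_f$ given for the classical case at the end of Section~\ref{sec:global}.
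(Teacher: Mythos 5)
Your overall route coincides with the paper's: read Definition~\ref{def: generalBm} as saying that $b^{(m)}_{\bff,g}$ is the minimal monic $b$ with $b(\sigma)(g\otimes 1)\in (V^m\Dxt)\delta$, and use the embedding of Remark~\ref{rem:embedding} to match this with the functional equation \eqref{eq:equation for bm}; the paper's proof is exactly this translation, stated in one sentence. However, your execution has a genuine gap, in two respects. First, the filtration you propose to compute with is the wrong one: $\ol{M}^{(m)}_{\bff}$ is defined via the operator filtration $(V^\bullet\Dxt)\delta$, not via the intrinsic filtration $V^\bullet M_\bff=\sum_\nu (V^{\bullet+|\nu|}K[\bfx])\otimes\bfp_\bft^\nu$, and the latter is built from $V^\bullet K[\bfx]$, which by Theorem~\ref{thm:1 bms} is the multiplier-ideal filtration. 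The proposition is a purely formal statement about $(V^m\Dxt)\delta$ and should be proved by commutator algebra alone; routing it through $V^\bullet M_\bff$ is not only unnecessary but would undercut Theorem~\ref{thm: shibuta 4.2}, whose whole content is the comparison between the operator filtration and the intrinsic one.

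Second, you flag the wrong direction as the delicate one. Passing from \eqref{eq:equation for bm} back into $(V^m\Dxt)\delta$ is immediate: write $h_k=\sum_{|\alpha|=m}c_{k\alpha}\bff^\alpha$ with $c_{k\alpha}\in K[\bfx]$, observe that $h_k\bff^\bfs=\sum_\alpha c_{k\alpha}\bft^\alpha\cdot\bff^\bfs$ and that each $P_k c_{k\alpha}\bft^\alpha$ lies in $V^m\Dxt$, and then use injectivity of the embedding; no containment from Remark~\ref{rem:embedding} is needed, and there is no issue of landing only in some a priori larger set. The direction that actually requires an argument is the converse: a general element of $(V^m\Dxt)\delta$ is a $\Dx$-combination of $\bft^\mu\bfp_\bft^\nu\delta$ with $|\mu|-|\nu|\geq m$, and to produce \eqref{eq:equation for bm} with $P_k$ in the restricted subalgebra $\Dx\<-\p_{t_i}t_j\>$ (Remark~\ref{rem:embedding} as stated only yields $V^0\Dxt$-coefficients) one must rewrite $\bft^\mu\bfp_\bft^\nu$ as a sum of terms $Q\,\bft^\alpha$ with $Q\in K\<\p_{t_i}t_j\mid 1\leq i,j\leq r\>$ and $|\alpha|=|\mu|-|\nu|\geq m$, and then use $\bft^\alpha\delta=\bff^\alpha\delta$. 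This elementary commutation lemma is the one nontrivial ingredient; the paper leaves it implicit in Remark~\ref{rem:embedding}, but your proposal never states it, and the substitute you offer (the $V^\bullet K[\bfx]$ computation, or a citation of \cite{shibuta}) does not supply it.
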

\begin{proof}
By the embedding in Remark~\ref{rem:embedding}, the existence of such an equation is equivalent to the existence of
$Q_k\in \Dx \<-\p_{t_i} t_j \mid 1\leq i,j \leq r \>$ and $\mu(k) \in \bN^r$ with $|\mu(k)| \geq m$ such that in $M_\bff$,
$b(\sigma) \cdot (g\otimes 1) = \sum_{k=1}^r Q_k \bft^{\mu(k)} \cdot (1\otimes 1)$.
\end{proof}

\begin{remark}
Since $(V^0\Dxt)\ol{g\otimes 1}\subseteq \overline{M}_\bff^{(1)}$ is a quotient of $\ol{M}_{\bff,g}$, the  generalized \BS\ polynomial $b_{\bff,g}$ is a multiple of the $m$-generalized \BS\ polynomial $b_{\bff,g}^{(1)}$.
When $g$ is a unit, the equality $b_{\bff,g} = b_{f,g}^{(1)}$ holds, as seen easily by comparing \eqref{eq:generalB} and \eqref{eq:equation for bm}.
However, this equality does not hold in general.
\end{remark}

\begin{example}\label{ex:different generalB1}
When $n=3$ and $\bff = \sum_{i=1}^3 x_i^2$, we have
\begin{align*}
b_{\bff,x_1}(s) = (s+1)(s+\frac{5}{2})
\quad \mbox{and}\quad b_{\bff,x_1}^{(1)}(s) = s+1.
\end{align*}
In particular, $b_{\bff,x_1}^{(1)}$ strictly divides $b_{\bff,x_1}$.
\end{example}

Proposition~\ref{prop: bm eq} translates into the following algorithm.
\begin{algorithm} \label{alg: generalB} $b_{\bff,g}^{(m)} = \generalB(\bff,g,m)$
\begin{algorithmic}\ifx\form{PREP} [1] \fi
\REQUIRE $\bff = \{f_1,\ldots,f_r\} \subset K[\bfx]$, $g\in K[\bfx]$, $m\in\bZ_{>0}$.
\ENSURE $b_{\bff,g}^{(m)}$, the $m$-generalized \BS\ polynomial (as defined in Definition~\ref{def: generalBm}).
%\smallskip \hrule \smallskip
\STATE %\COMMENT{Generators of $\Ann_\Dxt(\bffs)$} \\
\begin{tabbing}
$G_1 \leftarrow $ \= $\{t_j-f_j \mid j=1,\ldots,r\}$ $\cup$ $\{\p_{x_i}+\sum_{j=1}^r \frac{\p f_j}{\p x_i} \p_{t_j} \mid i=1,\ldots,n \}$.
\end{tabbing}
\STATE $G_2 \leftarrow \starIdeal(G_1,w) \cup \{ \bff^\alpha \mid \alpha\in\bN^r,\,|\alpha| = m \} $, 
        where $w$ assigns weight $1$ to all $\p_{t_j}$ and $0$ to all $\p_{x_i}$.
\RETURN $\LAtrick(g,G_2)$.
%\smallskip \hrule \smallskip
\end{algorithmic}
\end{algorithm}

%%%%%%%%%%%%%%%%%%%%%%%%%%%%%%%%%%%
\subsection{Jumping coefficients and the log canonical threshold}%%
%%%%%%%%%%%%%%%%%%%%%%%%%%%%%%%%%%%
\label{subsec:lct}
For the remainder of this article, set $K=\bC$.
Our algorithms for multiplier ideals are motivated by the following result.

\begin{theorem}\cite[Theorem~4.3]{shibuta} \label{thm: shibuta 4.2}
For $g\in K[\bfx]$ and $c < m + \lct(\bff)$,
$g\in\multI{\bff}{c}$ if and only if
$c$ is strictly less than every root of $b_{\bff,g}^{(m)}(-s)$. In other words,
\[
\multI{\bff}{c} = \{ g\in K[\bfx] \mid b_{\bff,g}^{(m)} (-\alpha) = 0 \Rightarrow c < \alpha \}.
\]
\end{theorem}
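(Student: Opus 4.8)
The plan is to deduce this from Theorem~\ref{thm:1 bms} together with Definition~\ref{def: generalBm}, by unwinding what the condition "$c$ is strictly less than every root of $b_{\bff,g}^{(m)}(-s)$" says about the $V$-filtration on $M_\bff$. Recall from Theorem~\ref{thm:1 bms} that $\multI{\bff}{c} = V^{c+\epsilon}K[\bfx]$ for small $\epsilon>0$, and that $V^m M_\bff = \sum_{\nu} (V^{m+|\nu|}K[\bfx])\otimes \bfp_\bft^\nu$; in particular the component $\nu = 0$ identifies $V^cK[\bfx]$ with $(V^cM_\bff)\cap (K[\bfx]\otimes 1)$. So the statement $g\in\multI{\bff}{c}$ is equivalent to $g\otimes 1 \in V^{c+\epsilon}M_\bff$. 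The task is therefore to show: for $c < m+\lct(\bff)$, one has $g\otimes 1\in V^{c+\epsilon}M_\bff$ if and only if every root $\alpha$ of $b_{\bff,g}^{(m)}$ (equivalently every root $-\alpha$ of $b_{\bff,g}^{(m)}(-s)$, i.e.\ $b_{\bff,g}^{(m)}(-\alpha)=0$) satisfies $\alpha > c$.

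First I would set up the standard dictionary between minimal polynomials of $\sigma$ and the $V$-filtration. The operator $\sigma = -\sum \p_{t_i}t_i$ acts on $\gr_V^c M_\bff := V^cM_\bff/V^{>c}M_\bff$ with a single eigenvalue (after the Kashiwara--Malgrange normalization, $\sigma - c$ acts nilpotently on $\gr_V^c$, or however the excerpt's conventions pin down the shift). Consequently, on the finitely-generated $V^0\Dxt$-module $(V^0\Dxt)(g\otimes 1)$, the minimal polynomial of $\sigma$ modulo $(V^m\Dxt)(g\otimes 1)$ — which is exactly $b_{\bff,g}^{(m)}$ by Definition~\ref{def: generalBm} — has as its roots precisely the $V$-orders $c$ with $c < m$ at which $(V^0\Dxt)(g\otimes 1)$ genuinely jumps, i.e.\ for which $(V^c\Dxt)(g\otimes 1)$ is strictly larger than what the smaller-order pieces generate inside $\overline{M}_\bff^{(m)}$. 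The cutoff $m$ in the index matches the truncation by $V^m\Dxt\delta$, and the hypothesis $c< m+\lct(\bff)$ guarantees we are in the range where this truncation does not lose information about membership at level $c$: since the $V$-filtration on $M_\bff$ at level $\nu=0$ starts jumping only at $\lct(\bff)$, any contribution at total order $<c$ coming from a component $\bfp_\bft^\nu$ with $|\nu|\ge 1$ would require $V$-order $< c - m < \lct(\bff)$ in the relevant factor — I would use precisely this numerical inequality to rule it out, so that $g\otimes 1\in V^{c+\epsilon}M_\bff$ is detected already in the quotient $\overline{M}_\bff^{(m)}$. Combining: $g\in\multI{\bff}{c}$ $\iff$ $g\otimes 1\in V^{c+\epsilon}M_\bff$ $\iff$ $\overline{g\otimes 1}$ lies in the part of $\overline{M}_{\bff,g}^{(m)}$ of $V$-order $>c$ $\iff$ $b_{\bff,g}^{(m)}$ has no root $\le c$ $\iff$ $b_{\bff,g}^{(m)}(-\alpha)=0\Rightarrow \alpha>c$.

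The main obstacle, I expect, will be the careful bookkeeping of the two places where the index shifts: the sign/shift convention relating $\sigma$ to $V$-order (the reason the answer comes out in terms of $b_{\bff,g}^{(m)}(-s)$ rather than $b_{\bff,g}^{(m)}(s)$), and the precise reason the bound $c<m+\lct(\bff)$ is exactly what is needed for the truncated module $\overline{M}_\bff^{(m)}$ to faithfully record membership in $\multI{\bff}{c}$. Both are essentially the content of \cite[Theorem~4.3]{shibuta}, and in the interest of brevity I would cite that result for the delicate comparison while spelling out the reduction above; the residual work is to check that our Definition~\ref{def: generalBm} of $b_{\bff,g}^{(m)}$ coincides with Shibuta's, which is immediate from Proposition~\ref{prop: bm eq} and the equational form of Shibuta's definition. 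I would also remark that the special case $m$ large recovers a membership test valid for all $c$ simultaneously, and that taking $g$ to range over a basis and reading off jumps recovers generating sets of $\multI{\bff}{c}$ and the jumping coefficients, which is what Section~\ref{sec:bsm+mult}'s algorithms exploit.
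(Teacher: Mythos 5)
Your proposal follows essentially the same route as the paper's proof: reduce membership in $\multI{\bff}{c}$ to $g\otimes 1\in V^{c+\epsilon}M_\bff$ via Theorem~\ref{thm:1 bms}, note that the hypothesis $c<m+\lct(\bff)$ makes the truncation by $(V^m\Dxt)\delta$ invisible at level $c$, and translate nonvanishing of $\Gr_V^\alpha \ol{M}_{\bff,g}^{(m)}$ into roots of $b_{\bff,g}^{(m)}(-s)$. The only real difference is bookkeeping: where you would defer the ``delicate comparison'' to Shibuta, the paper instead cites \cite[(2.3.1)]{bms} to identify the $V$-order of $g\otimes 1$ with $\min\{\alpha \mid \Gr_V^\alpha((V^0\Dxt)(g\otimes 1))\neq 0\}$ and to bound below the $V$-order of $(V^m\Dxt)\delta$ by $m+\lct(\bff)$, which is exactly the numerical point your sketch makes.
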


\begin{proof}
By Theorem~\ref{thm:1 bms}, %there is an equality
$\multI{\bff}{c} = V^{c+\epsilon}K[\bfx]$ for all sufficiently small $\epsilon>0$.
Hence, $g\in\multI{\bff}{c}$ precisely when
$g\otimes 1 \in V^\alpha M_\bff$ for all $\alpha\leq c$,
or equivalently,
\begin{align}\label{eq: key ineq}
c < \max \{ \alpha \mid g\otimes 1 \in V^\alpha M_\bff \}.
\end{align}
As in \cite[(2.3.1)]{bms},
the right side of \eqref{eq: key ineq} is equal to
$\min \{ \alpha \mid \Gr_V^\alpha ((V^0 \Dxt)(g\otimes 1)) \neq 0 \}$
and strictly less than
$\min \{ \alpha \mid \Gr_V^\alpha ((V^m \Dxt)\delta) \neq 0 \}$.
Thus by our choice of $m$,
$g\in\multI{\bff}{c}$ exactly when
$c$ is strictly greater than
$\min \{\alpha \mid \Gr_V^\alpha \ol{M}_{\bff,g}^{(m)} \neq 0 \}$.
The theorem now follows because
$\Gr_V^\alpha \ol{M}_{\bff,g}^{(m)} \neq 0$ if and only if
$b_{\bff,g}^{(m)}(-\alpha) = 0$.
\end{proof}

Theorem~\ref{thm: shibuta 4.2} provides a second membership test for membership in $\multI{\bff}{c}$;
moreover, the following corollary provides a method for computing the log canonical threshold and jumping coefficients of $\bff$ via the $m$-generalized \BS\ polynomial $b_{\bff}^{(1)} =b_{\bff,1}^{(1)}$.

\begin{cor}\label{cor:jumping coefficients}
For any positive integer $m$, the minimal root of $b_\bff^{(m)}(-s)$ is equal to the log-canonical threshold $\lct(\bff)$ of $\<\bff\> \subseteq K[\bfx]$. Further, the jumping coefficients of $\<\bff\>$ within the interval $[\lct(\bff), \lct(\bff) + m)$ are all roots of $b_\bff^{(m)}(-s)$.
\end{cor}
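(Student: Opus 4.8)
The plan is to derive Corollary~\ref{cor:jumping coefficients} directly from Theorem~\ref{thm: shibuta 4.2} applied with the particular choice $g = 1$, so that $b_{\bff,1}^{(m)} = b_\bff^{(m)}$. The first observation I would make is that $1 \in \multI{\bff}{c}$ if and only if $\multI{\bff}{c}$ is the unit ideal, which by the definition of jumping coefficients recalled in the introduction happens precisely when $c < \lct(\bff) = \xi_1$. On the other hand, Theorem~\ref{thm: shibuta 4.2} (with $g=1$), valid for all $c < m + \lct(\bff)$, says that $1 \in \multI{\bff}{c}$ if and only if $c$ is strictly less than every root of $b_\bff^{(m)}(-s)$, i.e. $c < \alpha_{\min}$, where $\alpha_{\min}$ is the minimal root of $b_\bff^{(m)}(-s)$. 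Comparing the two characterizations over the range $c < m + \lct(\bff)$ forces $\alpha_{\min} = \lct(\bff)$: indeed, if $\alpha_{\min} < \lct(\bff)$ then picking $c$ with $\alpha_{\min} \le c < \lct(\bff)$ gives a contradiction, and if $\alpha_{\min} > \lct(\bff)$ then picking $c$ with $\lct(\bff) \le c < \min(\alpha_{\min}, m + \lct(\bff))$ gives a contradiction (such a $c$ exists since $m > 0$). This proves the first assertion.

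For the second assertion I would argue that every jumping coefficient $\xi$ of $\<\bff\>$ lying in $[\lct(\bff), \lct(\bff)+m)$ is a root of $b_\bff^{(m)}(-s)$. Recall a jumping coefficient is precisely a value $\xi$ at which $\multI{\bff}{c}$ strictly decreases, i.e. $\multI{\bff}{\xi} \subsetneq \multI{\bff}{\xi-\epsilon}$ for all small $\epsilon > 0$. Using Theorem~\ref{thm:1 bms}, $\multI{\bff}{\xi - \epsilon} = V^\xi K[\bfx]$ and $\multI{\bff}{\xi} = V^{\xi+\epsilon}K[\bfx]$ for small $\epsilon$, so there is some $g \in K[\bfx]$ with $g \otimes 1 \in V^\xi M_\bff$ but $g\otimes 1 \notin V^{\xi+\epsilon}M_\bff$ for all small $\epsilon$; equivalently $\max\{\alpha \mid g\otimes 1 \in V^\alpha M_\bff\} = \xi$. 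By the identity used in the proof of Theorem~\ref{thm: shibuta 4.2} (from \cite[(2.3.1)]{bms}), this maximum equals $\min\{\alpha \mid \Gr_V^\alpha((V^0\Dxt)(g\otimes 1)) \ne 0\}$, and since $\xi < \lct(\bff) + m$ this minimum is computed by $\ol{M}_{\bff,g}^{(m)}$, i.e. $\min\{\alpha \mid \Gr_V^\alpha \ol{M}_{\bff,g}^{(m)} \ne 0\} = \xi$. Finally, as noted at the end of the proof of Theorem~\ref{thm: shibuta 4.2}, $\Gr_V^\alpha \ol{M}_{\bff,g}^{(m)} \ne 0$ iff $b_{\bff,g}^{(m)}(-\alpha)=0$, so $\xi$ is a root of $b_{\bff,g}^{(m)}(-s)$; and since $b_{\bff,g}^{(m)}$ divides $b_\bff^{(m)}$ — this divisibility follows because $(V^0\Dxt)\ol{g\otimes 1}$ is a $\sigma$-stable submodule of $\ol{M}_\bff^{(m)}$ acted on by the same $\sigma$, so its minimal polynomial divides that of $\sigma$ on all of $\ol{M}_\bff^{(m)}$ — we conclude $b_\bff^{(m)}(-\xi) = 0$.

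The main obstacle I anticipate is the divisibility statement $b_{\bff,g}^{(m)} \mid b_\bff^{(m)}$ for arbitrary $g$, which is implicitly needed for the second half: one must be careful that $b_\bff^{(m)} = b_{\bff,1}^{(m)}$ is the minimal polynomial of $\sigma$ acting on $\ol{M}_{\bff,1}^{(m)} = (V^0\Dxt)\ol{1\otimes 1}$, and that $(V^0\Dxt)\ol{g\otimes 1}$ for any polynomial $g$ is a sub-$(V^0\Dxt)$-module of $\ol{M}_\bff^{(m)}$ whose minimal $\sigma$-polynomial therefore divides the minimal $\sigma$-polynomial of the whole of $\ol M_\bff^{(m)}$ — but $b_\bff^{(m)}$ was only defined as the minimal polynomial on the cyclic submodule generated by $\ol{1\otimes 1}$, not on all of $\ol M_\bff^{(m)}$. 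The cleanest fix is to avoid this subtlety entirely for the jumping-coefficient claim: since $g$ ranges over a set containing $1$'s $K[\bfx]$-multiples and more, one instead invokes Theorem~\ref{thm: shibuta 4.2} directly to say that for $c$ just below $\xi$, $\multI{\bff}{c} = \{g \mid b_{\bff,g}^{(m)}(-\alpha)=0 \Rightarrow c < \alpha\}$ strictly contains $\multI{\bff}{\xi}$, witnessed by a $g$ whose minimal root of $b_{\bff,g}^{(m)}(-s)$ is exactly $\xi$; it then remains only to identify these finitely many $g$-dependent roots occurring over the interval with roots of the single polynomial $b_\bff^{(m)}$, and here the argument really does need the structural fact that every such cyclic submodule sits inside $\ol M_\bff^{(m)}$ and that $b_\bff^{(m)}(-s)$'s root set in $[\lct(\bff), \lct(\bff)+m)$ coincides with $\bigcup_g \{\text{roots of } b_{\bff,g}^{(m)}\} \cap [\lct(\bff),\lct(\bff)+m)$, which is exactly \cite[Theorem~4.3 and its corollary]{shibuta}. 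So the honest statement is that this corollary is essentially a repackaging of Shibuta's result together with Theorem~\ref{thm:1 bms}, and the write-up should make the divisibility/root-set bookkeeping explicit rather than gloss over it.
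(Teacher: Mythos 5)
Your argument is correct and is essentially the intended derivation: the paper states this corollary without a separate proof, as a direct consequence of Theorem~\ref{thm: shibuta 4.2} (with $g=1$ giving the lct statement, and with a witness $g\in\multI{\bff}{\xi-\epsilon}\setminus\multI{\bff}{\xi}$ whose minimal root of $b_{\bff,g}^{(m)}(-s)$ is forced to equal the jumping coefficient $\xi$). The ``obstacle'' you flag at the end is not a real one: by Definition~\ref{def: generalBm}, $\ol{M}_\bff^{(m)}=(V^0\Dxt)\ol{\delta}$ is itself the cyclic module generated by $\ol{1\otimes 1}$, so $b_\bff^{(m)}=b_{\bff,1}^{(m)}$ is the minimal polynomial of $\sigma$ on all of $\ol{M}_\bff^{(m)}$, and since $(V^0\Dxt)\ol{g\otimes 1}$ is a $\sigma$-stable submodule for every $g\in K[\bfx]$, the divisibility $b_{\bff,g}^{(m)}\mid b_\bff^{(m)}$ that your second paragraph needs is immediate.
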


%%%%%%%%%%%%%%%%%%%%%%%%%%%%%%
\subsection{Computing multiplier ideals}%%%%%%%%%%
%%%%%%%%%%%%%%%%%%%%%%%%%%%%%%
\label{subsec:multiplier}

Here we present an algorithm to compute multiplier ideals that simplifies the method of Shibuta \cite{shibuta}. In particular, significant improvement is achieved bypassing the primary decomposition computations required by Shibuta's method.

For a positive integer $m$, define the $K[\bfx,\sigma]$-ideal
\[
J_\bff(m) = \left( I_\bff^* + \Dxt\cdot \<\bff\>^m \right) \cap K[\bfx,\sigma],
\]
where $I_\bff^* \subset\Dxt$ is the ideal of the $(-w,w)$-homogeneous elements of $I_\bff$.
This ideal is closely related to the $m$-generalized \BS\ polynomials.

\begin{lemma}\label{lem:bm def J}
For $g\in K[\bfx]$,
the $m$-generalized \BS\ polynomial $b_{\bff,g}^{(m)}$ is equal to the monic polynomial $b(s)\in K[s]$ of minimal degree such that
\begin{align}\label{eq: bm and J}
\< b(\sigma) \> = (J_\bff(m) :g) \cap K[\sigma].
\end{align}
\end{lemma}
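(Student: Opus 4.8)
The plan is to connect the equational description of $b_{\bff,g}^{(m)}$ from Proposition~\ref{prop: bm eq} with the ideal-theoretic condition \eqref{eq: bm and J} by tracking the variable $\sigma = -\sum_i \p_{t_i}t_i$ through the $(-w,w)$-homogenization. First I would note that, by Proposition~\ref{prop: bm eq}, $b_{\bff,g}^{(m)}$ is the monic $b(s)$ of least degree for which the functional equation
\[
b(\sigma)\,g\,\bff^\bfs = \sum_{k=1}^r P_k\,h_k\,\bff^\bfs, \qquad P_k\in \Dx\<-\p_{t_i}t_j\rangle,\ h_k\in\<\bff\>^m,
\]
holds in $N_\bff$. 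So it suffices to prove that this equation has a solution if and only if $b(\sigma)\in (J_\bff(m):g)$, i.e.\ $b(\sigma)\,g \in J_\bff(m) = (I_\bff^* + \Dxt\cdot\<\bff\>^m)\cap K[\bfx,\sigma]$, together with the observation that $b(\sigma)g$ automatically lands in $K[\sigma]$ after the intersection is taken (hence the extra $\cap K[\sigma]$ on the right of \eqref{eq: bm and J}); minimality of the degree then transfers verbatim.

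The key step is the translation "functional equation in $N_\bff$" $\Longleftrightarrow$ "membership in $I_\bff^* + \Dxt\<\bff\>^m$". For this I would argue exactly as in the proof of Theorem~\ref{thm: BSxs} (and its multivariate version Theorem~\ref{thm: multiBSxs}): since $t_j$ acts on $N_\bff$ as multiplication by $f_j$, the ideal $\Dxt\cdot\<\bff\>^m$ corresponds to the $V^m$-part, and $\Ann_{\Dxt}\bff^\bfs = I_\bff$. Homogenizing with respect to $(-w,w)$ — which is what replaces $I_\bff$ by $I_\bff^*$ and, crucially, makes $\sigma$ a genuine commuting variable in $\gr_{(-w,w)}\Dxt\cong\Dxt$ as in Algorithm~\ref{alg: star} and Remark~\ref{rem:local generalB} — an element $b(\sigma)g$ of $K[\bfx,\sigma]$ lies in $I_\bff^*+\Dxt\<\bff\>^m$ precisely when there is an expression $b(\sigma)g = (\text{element of }I_\bff^*) + \sum_k Q_k h_k$ with $h_k\in\<\bff\>^m$, and passing back through the graph identification this is the same as a solution of the displayed functional equation with $P_k\in\Dx\<-\p_{t_i}t_j\rangle$. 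Taking the intersection with $K[\bfx,\sigma]$ and then, after dividing by the colon with $g$, with $K[\sigma]$, isolates the univariate polynomial relation, exactly matching \eqref{eq: bm and J}.

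The main obstacle I anticipate is the bookkeeping around $\sigma$ and the homogenization: one must check carefully that $\sigma$ is $(-w,w)$-homogeneous of degree $0$ (each $\p_{t_i}t_i$ has weight $+1$ in $t_i$ and $-1$ in $\p_{t_i}$), so that $b(\sigma)g$ is already homogeneous, and that the operators $P_k$ appearing in \eqref{eq:equation for bm} — which lie in $\Dx\<-\p_{t_i}t_j\rangle$, again degree-$0$ in the $(-w,w)$-grading — are exactly the ones produced when one clears denominators in the homogenized picture. A secondary point to verify is that $b(\sigma)\in K[\sigma]$ of minimal degree with $b(\sigma)g\in J_\bff(m)$ is the \emph{same} as the minimal-degree $b$ with $\<b(\sigma)\>=(J_\bff(m):g)\cap K[\sigma]$; this is immediate once one knows $(J_\bff(m):g)\cap K[\sigma]$ is a principal ideal of $K[\sigma]$, which follows because $K[\sigma]$ is a PID and the ideal is nonzero (nonvanishing of $b_{\bff,g}^{(m)}$ was recorded in the Remark after Definition~\ref{def: generalBm}). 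Once these homogeneity/degree checks are in place, the equivalence and the minimality statement fall out of Theorem~\ref{thm: multiBSxs} and Proposition~\ref{prop: bm eq} with no further computation.
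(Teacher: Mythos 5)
Your proposal is correct and follows essentially the same route as the paper: it reduces to the equational characterization of Proposition~\ref{prop: bm eq}, i.e.\ $b(\sigma)g \in I_\bff + \Dx\<-\p_{t_i}t_j\>\cdot\<\bff\>^m$, and then uses that $b(\sigma)g$ is $(-w,w)$-homogeneous of degree $0$ to pass to $I_\bff^* + \Dxt\cdot\<\bff\>^m$ and hence to $J_\bff(m)$. Your extra remarks (degree-$0$ homogeneity of $\sigma$ and of the $P_k$, and the PID argument identifying the minimal-degree $b$ with the generator of $(J_\bff(m):g)\cap K[\sigma]$) simply make explicit what the paper's two-sentence proof leaves implicit.
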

\begin{proof}
By \eqref{eq:equation for bm}, $b_{\bff,g}^{(m)}$ is the monic polynomial $b(s)\in K[s]$ of minimal degree such that
\[
b(\sigma)g \in I_\bff + \Dx\< -\p_{t_i} t_j \mid 1\leq i,j \leq r \> \cdot \< \bff \>^m.
\]
Since $b(\sigma)g$ is $(-w,w)$-homogeneous, we obtain \eqref{eq: bm and J}.
\end{proof}

\begin{theorem}\cite[Theorem~4.4]{shibuta} \label{thm: shibuta 4.3}
Let $J_\bff(m) = \bigcap_{i=1}^l q_i$ be a primary decomposition with
$q_i\cap K[\sigma] = (\sigma + c(i) )^{\kappa(i)}$ for some positive integer $\kappa(i)$.
Then for $c< \lct(\bff) + m$,
\[
\multI{\bff}{c} = \bigcap_{j:\hspace{.3ex} c(j)\geq c} (q_j \cap K[\bfx]).
\]
\end{theorem}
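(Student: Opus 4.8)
The plan is to deduce Theorem~\ref{thm: shibuta 4.3} from Lemma~\ref{lem:bm def J} together with the membership characterization of multiplier ideals in Theorem~\ref{thm: shibuta 4.2}. First I would translate the membership condition $g\in\multI{\bff}{c}$ into a statement about $(J_\bff(m):g)\cap K[\sigma]$. By Lemma~\ref{lem:bm def J}, the generator $b(s)$ of $(J_\bff(m):g)\cap K[\sigma]$ is exactly $b_{\bff,g}^{(m)}(s)$; its roots, negated, are the $\alpha$ with $b_{\bff,g}^{(m)}(-\alpha)=0$. Theorem~\ref{thm: shibuta 4.2} then says that for $c<\lct(\bff)+m$, $g\in\multI{\bff}{c}$ iff every root $\alpha$ of $b_{\bff,g}^{(m)}(-s)$ satisfies $c<\alpha$; equivalently, writing the irreducible factors of $b_{\bff,g}^{(m)}(s)$ as powers of $(\sigma+\alpha)$, the condition is that $(J_\bff(m):g)\cap K[\sigma]$ is divisible only by factors $(\sigma+\alpha)$ with $\alpha>c$.

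Next I would bring in the primary decomposition $J_\bff(m)=\bigcap_{i=1}^l q_i$ with $q_i\cap K[\sigma]=(\sigma+c(i))^{\kappa(i)}$. Since colon distributes over intersection, $(J_\bff(m):g)=\bigcap_i (q_i:g)$, and intersecting with $K[\sigma]$ gives $(J_\bff(m):g)\cap K[\sigma]=\bigcap_i\big((q_i:g)\cap K[\sigma]\big)$. For each $i$ the contraction $(q_i:g)\cap K[\sigma]$ is either all of $K[\sigma]$ (precisely when $g\in q_i$, since $q_i$ is $P_i$-primary and $g\notin P_i$ forces the colon to be the whole ring, while $g\in q_i$ trivially gives it) or it is $(\sigma+c(i))^{\kappa'(i)}$ for some $1\le\kappa'(i)\le\kappa(i)$ when $g\notin q_i$ (here I use that $q_i$ is primary with radical containing $\sigma+c(i)$, so any element of $(q_i:g)\cap K[\sigma]$ lies in the radical, hence is a power of $\sigma+c(i)$ up to unit). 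Thus the set of values $\alpha=c(i)$ that occur as roots of $b_{\bff,g}^{(m)}(-s)$ is exactly $\{\,c(i)\mid g\notin q_i\,\}$. Feeding this into the reformulated Theorem~\ref{thm: shibuta 4.2}: $g\in\multI{\bff}{c}$ iff for every $i$ with $g\notin q_i$ we have $c(i)>c$, equivalently iff $g\in q_i$ for every $i$ with $c(i)\le c$ — that is, for every $i$ with $c(i)< c$ in view of the fact that jumping coefficients occurring make $c(i)=c$ still force membership; I will need to double-check whether the inequality is $\ge c$ or $>c$, and the statement as written uses $c(j)\ge c$, matching the strict inequality $c<\alpha$ in Theorem~\ref{thm: shibuta 4.2}. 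Intersecting with $K[\bfx]$ throughout, this says $\multI{\bff}{c}=\bigcap_{j:\,c(j)\ge c}(q_j\cap K[\bfx])$.

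To finish cleanly I would argue the two inclusions separately. For $\supseteq$: if $g\in q_j\cap K[\bfx]$ for all $j$ with $c(j)\ge c$, then no root $\alpha=c(j)$ of $b_{\bff,g}^{(m)}(-s)$ can have $\alpha\le c$, so $c<\alpha$ for all roots and $g\in\multI{\bff}{c}$ by Theorem~\ref{thm: shibuta 4.2}. For $\subseteq$: if $g\in\multI{\bff}{c}$, then every root $\alpha$ of $b_{\bff,g}^{(m)}(-s)$ satisfies $\alpha>c$; since the roots are precisely $\{c(i):g\notin q_i\}$, any $j$ with $c(j)\le c$ must have $g\in q_j$, hence $g\in q_j\cap K[\bfx]$. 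Both inclusions use the hypothesis $c<\lct(\bff)+m$ only through the validity of Theorem~\ref{thm: shibuta 4.2}.

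The main obstacle I anticipate is the bookkeeping of the contraction $(q_i:g)\cap K[\sigma]$ — specifically, verifying that for $g\notin q_i$ this contraction is a nonzero power of $(\sigma+c(i))$ rather than something smaller or larger, and that its \emph{vanishing locus} (the value $-c(i)$) is what controls the root set of $b_{\bff,g}^{(m)}$. This rests on $q_i$ being primary and on $\sigma+c(i)$ generating $q_i\cap K[\sigma]$, so one needs that $K[\sigma]\hookrightarrow K[\bfx,\sigma]/q_i$ has nilpotent image of the class of $\sigma+c(i)$; that is exactly the hypothesis $q_i\cap K[\sigma]=(\sigma+c(i))^{\kappa(i)}$. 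Once that is in hand the rest is a routine combination of Lemma~\ref{lem:bm def J}, Theorem~\ref{thm: shibuta 4.2}, and the distributivity of colon and contraction over finite intersections.
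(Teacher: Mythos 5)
Your route---Lemma~\ref{lem:bm def J} plus the membership test of Theorem~\ref{thm: shibuta 4.2}, followed by colon/contraction bookkeeping through the primary decomposition---is the intended one (the paper's own proof compresses exactly this into a reference to Lemma~\ref{lem:bm def J}), and your middle paragraph is essentially sound: for $g\notin q_i$ the colon $(q_i:g)$ is a proper ideal contained in $\sqrt{q_i}$, so $(q_i:g)\cap K[\sigma]$ is squeezed between $(\sigma+c(i))^{\kappa(i)}$ and $(\sigma+c(i))$, and the roots of $b_{\bff,g}^{(m)}(-s)$ are exactly $\{\,c(i)\mid g\notin q_i\,\}$. (Minor slip: if $g\notin\sqrt{q_i}$ then $(q_i:g)=q_i$, not the unit ideal; the colon is the unit ideal precisely when $g\in q_i$, which is all your dichotomy needs.)

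The genuine problem is the final step. Your computation gives: $g\in\multI{\bff}{c}$ if and only if $c<c(i)$ for every $i$ with $g\notin q_i$, i.e.\ $g\in q_i$ for every $i$ with $c(i)\le c$, i.e.\ $\multI{\bff}{c}=\bigcap_{j:\,c(j)\le c}(q_j\cap K[\bfx])$. This is not the displayed index set $\{\,j\mid c(j)\ge c\,\}$, and the two inclusions you give to force agreement are invalid: in the $\supseteq$ direction, knowing $g\in q_j$ for all $j$ with $c(j)\ge c$ only tells you the surviving roots satisfy $c(i)<c$---the opposite of what Theorem~\ref{thm: shibuta 4.2} requires---and in the $\subseteq$ direction you prove membership in the $\le$-indexed intersection, not the $\ge$-indexed one. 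No bookkeeping can reconcile them: the $\ge$-indexed intersection grows as $c$ grows, while multiplier ideals shrink. Concretely, for $f=x$ and $m=1$ one has $J_f(1)=\<x,\sigma+1\>$ (indeed $x\otimes1=t\cdot\delta\in(V^1\Dxt)\delta$ gives $b^{(1)}_{f,x}=1$, hence $x\in J_f(1)$, while $J_f(1)\cap K[\sigma]=\<\sigma+1\>$ by Lemma~\ref{lem:bm def J} with $g=1$, and $\<x,\sigma+1\>$ is maximal), so the $\ge$-indexed formula outputs $\<x\>$ for $c<1$ and $K[x]$ for $1\le c<2$, reversing the true $\multI{x}{c}$. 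In other words, your bookkeeping is correct and shows that, under the conventions fixed in this paper, the formula must be indexed by $c(j)\le c$; the right move at the end was to flag this discrepancy with the printed statement rather than to smooth it over with the claim that the printed $c(j)\ge c$ matches the strict inequality of Theorem~\ref{thm: shibuta 4.2}.
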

\begin{proof}
We see from \eqref{eq:equation for bm} that $b_{\bff,g}^{(m)}(s)$ is the monic polynomial $b(s)$ of minimal degree such that there exist some $P_k\in  \Dx \<-\p_{t_i} t_j \mid 1\leq i,j \leq r \>$ and $h_k\in \<\bff\>^m$ such that
$(b(\sigma)g - \sum_{k} P_k h_k) \in I_\bff$.
Equivalently,
\[
b(\sigma)g \in \left(I_\bff^* +  \Dxt \cdot \<\bff\>^m \right) \cap K[\bfx,\sigma].
\]
The theorem now follows from Lemma~\ref{lem:bm def J}.
\end{proof}

The following is based on methodology used in the computation of the local $b$-function and, in particular, employs Algorithm~\ref{alg: exceptional locus core}. Its correctness follows immediately from Theorem~\ref{thm: shibuta 4.3} and the results of Section~\ref{sec:local}.

\begin{algorithm} \label{alg: multI} $ \multI{\bff}{c} = \multiplierIdeal(\bff,c)$
\begin{algorithmic}\ifx\form{PREP} [1] \fi
\REQUIRE $\bff = \{f_1,\ldots,f_r\} \subset K[\bfx]$, $c\in\bQ$.
\ENSURE $\multI{\bff}{c}$, the multiplier ideal of $\bff$ with coefficient $c$.
\STATE %\COMMENT{Generators of $\Ann_\Dxt(\bffs)$} \\
\begin{tabbing}
$G_1 \leftarrow $ \= $\{t_j-f_j \mid j=1,\ldots,r\}$ $\cup$ $\{\p_{x_i}+\sum_{j=1}^r \frac{\p f_j}{\p x_i} \p_{t_j} \mid i=1,\ldots,n \}$.
\end{tabbing}
\STATE $m \leftarrow \lceil \max\{c-\lct(\bff), 1\} \rceil$.
\IF{$c-\lct(\bff)$ is integer and $\geq 1$}
\STATE $m \leftarrow m + 1$
\ENDIF
\STATE \begin{tabbing}$G_2 \leftarrow \starIdeal(G_1,w)$ \= $\cup\ \{ \bff^\alpha \mid \alpha\in\bN^r,\,|\alpha| = m \}$
 $\cup\ \{s-\sigma\} \subset \Dxt[s]$,
       where $w$ assigns\\ weight $1$ to all $\p_{t_j}$ and $0$ to all $\p_{x_i}$.
       \end{tabbing}
\STATE $B \leftarrow \left\{ \ 
%\begin{array}{c}
%\mbox{a Gr\"obner basis of } G_2 \mbox{ w.r.t. }\\
%\mbox{an order eliminating } \{\p_\bfx, \bft, \p_\bft\}
%\end{array}
\mbox{a Gr\"obner basis of } G_2 \mbox{ w.r.t. an order eliminating } \{\p_\bfx, \bft, \p_\bft\}\ 
\right\} \,\cap\, K[\bfx,s]$. \label{alg: multI elimination step}
\STATE \label{alg: multI generalB} $b \leftarrow  \generalB(\bff,1,m)$. 
\COMMENT{The computation of $b_{\bff,1}^{(m)}$ may make use of $B$.}
\STATE{$b' \leftarrow $ product of factors $(s-c')^{\alpha(c')}$ of $b$ over all roots $c'$ of $b_f^{(m)}$ such that  $-c'>c$, where $\alpha(c')$ equals the multiplicity of the root $c'$.} \label{alg: multI b'}
\RETURN $\locusCore(B,b')$.
%\smallskip \hrule \smallskip
\end{algorithmic}
\end{algorithm}

As noted in \cite[Remark~4.6.ii]{shibuta}, 
$\multI{\bff}{c} = \<\bff\>\cdot \multI{\bff}{c-1}$ when $c$ is at least equal to the analytic spread $\lambda(\bff)$ of $\<\bff\>$.
(The analytic spread of $\<\bff\>$ is the least number of generators for an ideal $I$ such that $\<\bff\>$ is integral over $I$.)
Hence, to find generators for any multiplier ideal of $\bff$,
it is enough to compute $J_\bff(m)$ for one $m \geq \lambda(\bff) - \lct(\bff)$.

When it is known that the multiplier ideal $\multI{\bff}{c}$ is 0-dimensional, it is possible to bypass the elimination step (line \ref{alg: multI elimination step} of Algorithm~\ref{alg: multI}) in the following fashion.
For a fixed monomial ordering $\geq$ on $K[\bfx]$,
we know that there are finitely many standard monomials
(monomials not in the initial ideal $\IN_\geq \multI{\bff}{c})$.
Let $b'\in\bQ[s]$ be the polynomial produced by lines \ref{alg: multI generalB}
and \ref{alg: multI b'} of the above algorithm.
A basis for the $K$-linear relations amongst
$\{ \bfx^\alpha b'(\sigma) \mid |\alpha|\leq d \}$ modulo $J_\bff(m)$ gives
a basis $P_d$ for the $K$-space of polynomials in $\multI{\bff}{c}$ up to degree $d$.
By starting with $d=0$ and incrementing $d$
until all monomials of degree $d$ belong to $\IN_\geq\<P_{d-1}\>$,
we obtain $\<P_d\> = \multI{\bff}{c}$ upon termination.

\begin{algorithm}\label{alg: multI-lin-alg} $\multI{\bff}{c}=\multiplierIdealLinAlg(\bff,c,d_\text{max})$
\begin{algorithmic}\ifx\form{PREP} [1] \fi
\REQUIRE $\bff = \{f_1,\ldots,f_r\} \subset K[\bfx]$, $c\in\bQ$, $d_\text{max}\in\bN$.
\ENSURE the multiplier ideal $\multI{\bff}{c}\subset K[\bfx]$, when it is generated in degrees at most $d_\text{max}$.
\STATE %\COMMENT{Generators of $\Ann_\Dxt(\bffs)$} \\
\begin{tabbing}
$G_1 \leftarrow $ \= $\{t_j-f_j \mid j=1,\ldots,r\}$ $\cup$ $\{\p_{x_i}+\sum_{j=1}^r \frac{\p f_j}{\p x_i} \p_{t_j} \mid i=1,\ldots,n \}$.
\end{tabbing}
\STATE $m \leftarrow \lceil \max\{c-\lct(\bff), 1\} \rceil$.
\IF{$c-\lct(\bff)$ is an integer and $\geq 1$}
\STATE $m \leftarrow m + 1$
\ENDIF
\STATE \begin{tabbing}$G_2 \leftarrow \starIdeal(G_1,w)$ \= $\cup\ \{ \bff^\alpha \mid \alpha\in\bN^r,\,|\alpha| = m \}  \subset \Dxt$,
where $w$ assigns weight $1$ to\\ all $\p_{t_j}$ and $0$ to all $\p_{x_i}$.        \end{tabbing}
\STATE $B \leftarrow \left\{
\begin{array}{c}
\mbox{a Gr\"obner basis of } G_2 \mbox{ w.r.t. }
\mbox{{\bf any monomial order}}
\end{array}
\right\}$.
\STATE $b \leftarrow  \generalB(\bff,1,m)$.
\STATE{$b' \leftarrow $ product of factors $(s-c')^{\alpha(c')}$ of $b$ over all roots $c'$ of $b_f^{(m)}$ such that  $-c'>c$, where $\alpha(c')$ equals the multiplicity ot the root $c'$.}
\STATE $d \leftarrow -1$; $P \leftarrow \emptyset \subset K[\bfx]$ with $\geq$ that respects degree.
\WHILE{$P = \emptyset$ \textbf{or} $(\IN_\geq\<P\>$ does not contain all monomials of degree $d$ \textbf{and} $d<d_\text{max})$}
\STATE $d \leftarrow d + 1$,
\STATE $A \leftarrow \{\alpha \mid |\alpha|\leq d, \, \bfx^\alpha \notin \IN_\geq\<P\>\}$.
\STATE Find a basis $Q$ for the $K$-syzygies $(q_\alpha)_{\alpha\in A}$ such that
	\[
	\sum_{\alpha\in A} q_\alpha \NF_B(\bfx^\alpha b(\sigma)) = 0.
	\]
\STATE $P \leftarrow P \cup \{ \sum_{\alpha\in A} q_\alpha \bfx^\alpha \mid (q_\alpha) \in Q\}$.
\ENDWHILE
\RETURN $\<P\>$.
%\smallskip \hrule \smallskip
\end{algorithmic}
\end{algorithm}

Notice that with $d_{max}=\infty$ the algorithm terminates in case $\dim \multI{\bff}{c} = 0$.  It also can be used to provide a $K$-basis of the up-to-degree-$d_\text{max}$ part of an ideal of any dimension.

%%%%%%%%%%%%%%%%%%%%%%%%%%%%%%%%%%%%%%%%
\section{E\lowercase{xamples}}%%%%%%%%%%%%%%%%%%%%%%%%
%%%%%%%%%%%%%%%%%%%%%%%%%%%%%%%%%%%%%%%%
\label{sec:examples}

We have tested our implementation on the problems in \cite{shibuta}. In addition, this section provides examples from other sources with the theoretically known \BS\ polynomials, log-canonical thresholds, jumping numbers, and/or multiplier ideals; below is the output of our algorithms on several of them.

The authors would like to thank Zach Teitler for suggesting interesting examples, some of which are beyond the reach of our current implementation.
We also thank Takafumi Shibuta for sharing his script (written in risa/asir~\cite{risa-asir-www}), which is the only other existing software for computing multiplier ideals.

A note on how to access Macaulay2 scripts generating examples, including the ones in this paper and some unsolved challenges, is posted at \cite{Berkesch-Leykin:multiplier-ideals-www} along with other useful links.

\begin{example}\label{ex:nonJC root}
When $f = x^5 + y^4 + x^3y^2$, Saito observed that not all roots of $b_f(-s)$ are jumping coefficients \cite[Example~4.10]{saito on}.
The roots of $b_f(-s)$ within the interval $(0,1]$ are
\[
\frac{9}{20}, \frac{11}{20}, \frac{13}{20}, \frac{7}{10}, \frac{17}{20}, \frac{9}{10}, \frac{19}{20}, 1.
%, \frac{21}{20}, \frac{11}{10}, \frac{23}{20},\frac{13}{10}, \frac{27}{20} \right\},
\] %each with multiplicity one;
However, $\frac{11}{20}$ is not a jumping coefficient of $f$.
This can be seen in %the ideal 
$J_f(1)$ from Theorem~\ref{thm: shibuta 4.3}, which has, among others, the primary components $\<s+\frac{9}{20},y,x\>$ and $\<s+\frac{11}{20},y,x\>$.
In fact,
\[
\multI{f}{c} = \begin{cases}
\cc[x,y] & \mbox{if $0 \leq c < \frac{9}{20}$,}\\
\< x,y \> & \mbox{if $\frac{9}{20} \leq c < \frac{13}{20}$,}\\
\< x^2,y \> & \mbox{if $\frac{13}{20} \leq c < \frac{7}{10}$,}\\
\< x^2,xy,y^2 \> & \mbox{if $\frac{7}{10} \leq c < \frac{17}{20}$,}\\
\< x^3,xy,y^2 \> & \mbox{if $\frac{17}{20} \leq c < \frac{9}{10}$,}\\
\< x^3,x^2y,y^2 \> & \mbox{if $\frac{9}{10} \leq c < \frac{19}{20}$,}\\
\< x^3,x^2y,xy^2,y^3 \> & \mbox{if $\frac{19}{20} \leq c < 1$,}
\end{cases}
\]
and $\multI{\bff}{c} = \<\bff\>\cdot \multI{\bff}{c-1}$ for all $c\geq1$.
%
%Associated primes
%\[
%{ideal(10*s+7,y,x), ideal(10*s+13,y,x), ideal(10*s+9,y,x), ideal(10*s+11,y,x), ideal(20*s+9,y,x), ideal(20*s+21,y,x), ideal(20*s+19,y,x), ideal(s+1,x^5+x^3*y^2+y^4), ideal(20*s+13,y,x), ideal(20*s+27,y,x), ideal(20*s+23,y,x), ideal(20*s+11,y,x), ideal(20*s+17,y,x)}
%\]
%
%Primary decomposition
%\[
%o18 = {
%ideal(20*s+9,y,x),
%ideal(20*s+11,y,x),
%ideal(20*s+13,y,x^2),
%ideal(10*s+7,x,y^2),
%ideal(20*s+17,y,x^3)}
%ideal(10*s+9,y^2,x^2),
%ideal(20*s+19,x*y,5*x^2+y^2),
%ideal(s+1,x^5+x^3*y^2+y^4),
%ideal(20*s+21,x*y,y^3,x^3+4*y^2),
%ideal(10*s+11,y^2,x^3),
%ideal(20*s+23,y^3,x*y^2+100*x^2+40*y^2,x^2*y,x^3-40*x^2-16*y^2),
%ideal(10*s+13,x*y^2,x^3+2*y^2,y^4),
%ideal(20*s+27,y^3,3*x^2*y^2-200*x^3-120*x*y^2,x^3*y,x^4+40*x^3+24*x*y^2),
%\]
\end{example}

\begin{example}%from Zach
We compute \BS\ polynomials to verify examples corresponding to \cite[Example~7.1]{zach line arr}.
The $\cc[x,y,z]$-ideal
\[
\<\bff\> = \< x-z,y-z \> \cap \< 3x-z,y-2z \> \cap \< 5y-x,z \>
\]
defining three non-collinear points in $\bP^2$ has
\[
b_\bff(s) = (s+\frac{3}{2})(s+2)^2.
\]
In particular, its log canonical threshold is $\frac{3}{2}$.
The multiplier ideals in this case are
\[
\multI{f}{c} = \begin{cases}
\cc[x,y,z] & \mbox{if $0\leq c< \frac{3}{2}$,} \\
\< x,y,z \>  & \mbox{if $\frac{3}{2}\leq c<2$,}
\end{cases}
\]
and  $\multI{\bff}{c} = \<\bff\>\cdot \multI{\bff}{c-1}$ for all $c\geq2$.
On the other hand, the $\cc[x,y,z]$-ideal
\[
\<{\bf g}\> = \< y,z \> \cap \< x-2z,y-z \> \cap \< 2x-3z,y-z \>
\]
defines three collinear points in $\bP^2$.
Since
\[
b_{\bf g}(s) = (s+\frac{5}{3})(s+2)^2(s+\frac{7}{3}),
\]
the log canonical threshold of ${\bf g}$ is $\frac{5}{3}$.
Here the multiplier ideals are
\[
\multI{\bf g}{c} = \begin{cases}
\cc[x,y,z] & \mbox{if $0\leq c< \frac{5}{3}$,} \\
\< x,y,z \> & \mbox{if $\frac{5}{3} \leq c < 2$,} \\
\end{cases}
\]
and $\multI{\bf g}{c} = \<\bf g\>\cdot \multI{\bf g}{c-1}$ for all $c\geq2$.
Thus, as Teitler points out, although ${\bf g}$ defines a more special set than $\bff$, it yields a less singular variety.
\end{example}

\begin{example}
Consider $f = (x^2 - y^2)(x^2 - z^2)(y^2 - z^2)z$,
the defining equation for a nongeneric hyperplane arrangement.
Saito showed that $\frac{5}{7}$ is a root of $b_f(-s)$ but not a jumping coefficient \cite[5.5]{saito mult}.
We verified this, obtaining the root $1$ of $b_f(-s)$ with multiplicity 3, as well the following roots of multiplicity 1 (including $\frac{5}{7}$):
\[
\frac{3}{7}, \frac{4}{7}, \frac{2}{3}, \frac{5}{7}, \frac{6}{7}, \frac{8}{7}, \frac{9}{7}, \frac{4}{3}, \frac{10}{7}, \frac{11}{7}.
\]
Further,
\[
\multI{f}{c} = \begin{cases}
\cc[x,y,z] & \mbox{if $0 \leq c < \frac{3}{7}$,}\\
\< x,y,z \> & \mbox{if $\frac{3}{7} \leq c < \frac{4}{7}$,}\\
\< x,y,z \>^2& \mbox{if $\frac{4}{7} \leq c < \frac{2}{3}$,}\\
\<z,x\> \cap \<z,y\> \cap \\
\<y+z,x+z\> \cap \<y+z,x-z\> \cap\\
\<y-z,x+z\> \cap \<y-z,x-z\> & \mbox{if $\frac{2}{3} \leq c < \frac{6}{7}$,}\\
\<z,x\> \cap \<z,y\> \cap \\
\<y+z,x+z\> \cap \<y+z,x-z\> \cap\\
\<y-z,x+z\> \cap \<y-z,x-z\> \cap \\
\<z^{3},y z^{2},x z^{2},x y z,y^{3},x^{3},x^{2} y^{2}\> & \mbox{if $\frac{6}{7} \leq c < 1$,}
\end{cases}
\]
and $\multI{f}{c} = \<\bff\> \cdot \multI{\bff}{c-1}$ for all $c\geq1$.
%
%The associated primes of $J_f(1)$ here are:
%ideal (y + 1, s + 1),
%ideal (y - 1, s + 1),
%ideal (x - y, s + 1),
%ideal (x + y, s + 1),
%ideal (x - 1, s + 1),
%ideal (x + 1, s + 1),
%ideal (y, x, s + 1),
%ideal (y - 1, x + 1, s + 1),
%ideal (y + 1, x - 1, s + 1),
%ideal (y + 1, x + 1, s + 1),
%ideal (y - 1, x - 1, s + 1),
%ideal (y + 1, x - 1, 3s + 2),
%ideal (y + 1, x + 1, 3s + 2),
%ideal (y - 1, x - 1, 3s + 2),
%ideal (y - 1, x + 1, 3s + 2),
%ideal (y + 1, x + 1, 3s + 4),
%ideal (y - 1, x - 1, 3s + 4),
%ideal (y + 1, x - 1, 3s + 4),
%ideal (y - 1, x + 1, 3s + 4)}
\end{example}

All examples in this section involve multiplier ideals of low dimension. In our experience, Algorithm~\ref{alg: multI-lin-alg} for a multiplier ideal of positive yet low dimension with a large value for $d_\text{max}$ runs significantly faster than Algorithm~\ref{alg: multI}.
This is due to the avoidance of an expensive elimination step.

%%%%%%%%%%%%%%%%%%%%%%%%%%%%%%%%%%%%%%%%
\bibliographystyle{plain}
%%%%%%%%%%%%%%%%%%%%%%%%%%%%%%%%%%%%%%%%
%%%%%%%%%%%%%%%%%%%%%%%%%%%%%%%
%%%%%%%%%%%%%%%%%%%%%%%%%%%%%%%%%%%%%%%%%%
\bigskip

%\bibliography{../../W/anton_leykin_bib,../../W/leykin}

%%%%%%%%%%%%%%%%%%%%%%%%%%%%%%%%%%%%%%%%%%
\end{document}